\title[Analyticity of solutions to the Euler Equations in a Half Space]{The Domain of Analyticity of Solutions to the Three-Dimensional Euler Equations in a Half Space}
\author{Igor Kukavica}
\author{Vlad Vicol}
\address{Department of Mathematics, University of Southern California, Los Angeles, CA 90089}
\email{kukavica@usc.edu}
\address{Department of Mathematics, University of Southern California, Los Angeles, CA 90089}
\email{vicol@usc.edu}
\newtheorem{theorem}{Theorem}[section]
\newtheorem{lemma}[theorem]{Lemma}
\theoremstyle{definition}
\newtheorem{remark}{Remark}
\def\CCC{\mathcal{C}}
\def\PPP{\mathcal{P}}
\def\hhh{\Omega}
\def\Xtau{X_{\tau(t)}}
\def\Ytau{Y_{\tau(t)}}
\def\curl{\mathop{\rm curl} \nolimits}
\subjclass[2000]{Primary: 76B03; Secondary: 35L60.}
\keywords{Euler equations, analyticity radius, Gevrey class.}
\begin{document}
\maketitle

\begin{abstract}
We address the problem of analyticity up to the boundary of solutions to the Euler equations in the half space. We characterize the rate of decay of the real-analyticity radius of the solution $u(t)$ in terms of $\exp{\int_{0}^{t} \Vert \nabla u(s) \Vert_{L^\infty} ds}$, improving the previously known results. We also prove the persistence of the sub-analytic Gevrey-class regularity for the Euler equations in a half space, and obtain an explicit rate of decay of the radius of Gevrey-class regularity.
\end{abstract}

\section{Introduction}\label{sec:intro}

The Euler equations on a half space for the velocity vector field
$u(x,t)$ and the scalar pressure field $p(x,t)$, where
$x\in \hhh = \{x\in {\mathbb R}^3:x_3 >0\}$ and $t \geq 0$, are
given by
\begin{align}
&\partial_t u + (u\cdot \nabla) u + \nabla p = 0, \  \mbox{in}\ \hhh \times (0,\infty), \tag{E.1}\label{eq:E1}\\
&\nabla \cdot u = 0, \  \mbox{in}\ \hhh \times (0,\infty),  \tag{E.2}\label{eq:E2}\\
& u \cdot n = 0, \  \mbox{on}\ \partial\hhh \times (0,\infty)
\tag{E.3}\label{eq:E3},
\end{align}
where $n = (0,0,-1)$ is the outward unit normal to $\partial\hhh =
\{ x\in {\mathbb R}^3:x_3 =0\}$. We consider the initial value problem associated to
\eqref{eq:E1}--\eqref{eq:E3} with a divergence free initial datum
\begin{align}u(0) = u_0,\tag{E.4}\  \mbox{in}\ \hhh. \label{eq:E4}\end{align} The local existence and uniqueness of $H^r$-solutions, with $r > 3/2 + 1$, on a maximal time interval $[0,T_*)$ holds (cf.~\cite{BoB,EM,Ka,MB,T}), and $\lim_{T\nearrow T_*} \int_{0}^{T} \Vert \curl u(t) \Vert_{L^\infty} dt = \infty$, if $T_*<\infty$ (cf.~\cite{BKM}); additionally the persistence of $C^\infty$ smoothness was proven by Foias, Frisch, and Temam \cite{FFT}. In this paper we address the solutions of the Euler initial value problem evolving from real-analytic and Gevrey-class initial datum (up to the boundary), and characterize the domain of analyticity. We emphasize that the radius of real-analyticity gives an estimate on the minimal scale in the flow \cite{HKR,K2}, and it also gives the explicit rate of exponential decay of its Fourier coefficients \cite{FT}.

In a three dimensional bounded domain, the persistence of analyticity was proven by Bardos and Benachour \cite{BB} by an implicit argument (see also Alinhac and M\'etivier \cite{AM}). In \cite{B,Be} the authors give an explicit estimate on the radius of analyticity, but which vanishes in finite time (independent of $T_*$). However, the proof of persistency \cite{BB} can be modified to show that the radius of analyticity decays at a rate proportional to the exponential of a high Sobolev norm of the solution (see also \cite{AM}). On the three dimensional periodic domain (or equivalently on ${\mathbb R}^3$) this is the same rate obtained by Levermore and Oliver in \cite{LO}, using the method of Gevrey-class regularity. This Fourier based method was introduced by Foias and Temam \cite{FT} to study the analyticity of the Navier-Stokes equations. For further results on analyticity see \cite{AM,CTV,FTi,GK1,GK2,K1,K2,L1,L2,Lb,OT,SC}. Explicit and even algebraic lower bounds for the radius of analyticity for dispersive equations were obtained by Bona, Gruji\'c, and Kalisch in \cite{BGK1,BGK2} (see also \cite{BG,BL}).

In \cite{KV} we have proven that in the periodic setting, or on ${\mathbb R}^3$, the analyticity radius decays algebraically in the Sobolev norm $\Vert \curl u(t) \Vert_{H^r}$, with $r>7/2$, and exponentially in $\int_{0}^{t} \Vert \nabla u(s) \Vert_{L^\infty}$, for all $t<T_*$. In the present paper we show that the algebraic dependence on the Sobolev norm holds in the case when  the domain has boundaries (cf.~Theorem \ref{thm:main}), thereby improving the previously known results. The interior analyticity in the case of the half-space, for short time (independent of $T_*$), was treated in \cite{SC}. We note that the shear flow example of Bardos and Titi \cite{BT} (cf.~\cite{DM,Y2}) may be used to construct explicit solutions to the three-dimensional Euler equations whose radius of analyticity is decaying for all time.

Additionally we prove the persistence of sub-analytic Gevrey-class regularity up to the boundary (cf.~\cite{FT,LM}) for the Euler equations on the half space. To the best of our knowledge this was only known for the periodic domain cf.~\cite{KV,LO}, but not for a domain with boundary. The methods of \cite{AM,BB,Lb,SC} rely essentially on the special structure of the complex holomorphic functions, and do not apply to the non-analytic Gevrey-class setting.

The presence of the boundary creates several difficulties that do not arise in the periodic setting. In particular we cannot use Fourier-based methods, nor can we use the vorticity formulation of the equations. Instead we need to estimate the pressure, which satisfies (cf.~\cite{T}) the elliptic Neumann problem
\begin{align}
  &- \Delta p = \partial_j u_i \partial_i u_j,\  \mbox{in}\ \hhh \times (0,\infty), \tag{P.1}
  \label{eq:P1}\\
  &\frac{\partial p}{\partial n} = (u\cdot \nabla) u \cdot n = 0, \  \mbox{on}\ \partial\hhh \times (0,\infty), \tag{P.2} \label{eq:P20}
\end{align}
since $n=(0,0,-1)$, where the summation convention on repeated indices was used in \eqref{eq:P1}. In order to close our argument we need to show that the pressure has the same analyticity radius as the velocity, and so we cannot appeal to the inductive argument of Lions and Magenes \cite{LM}. Moreover, the nature of the elliptic/hyperbolic boundary value problem imposes certain restrictions on the weights of the Sobolev norms that comprise the analytic norm. The analytic norm we define (cf.~Section \ref{sec:proof}) respects the symmetries of the problem and is adequate to account for the transfer of derivatives arising in the higher regularity estimates for the pressure.

The paper is organized as follows. In Section~\ref{sec:thm} we state our main result, Theorem~\ref{thm:main}. In Section~\ref{sec:proof} we prove the main theorem assuming two key estimates on the convection term and the pressure term, Lemma~\ref{lemma:commutator} and Lemma~\ref{lemma:pressure}. Section~\ref{sec:commutator} contains the proof of the commutator estimate Lemma~\ref{lemma:commutator}, and lastly, the higher regularity estimates for the pressure and the proof of Lemma~\ref{lemma:pressure} are given in Section~\ref{sec:pressure}.

\section{Main Theorems}\label{sec:thm}
The following statement is our main theorem addressing the analyticity of the solution. Theorem~\ref{thm:gevrey} below concerns the Gevrey class persistence.
\begin{theorem}\label{thm:main}
Fix $r>9/2$. Let $u_0 \in H^r(\Omega)$ be divergence-free and
uniformly real-analytic in $\Omega$. Then the unique solution $u(t) \in C(0,T_*;H^r(\Omega))$ of the initial value problem associated to the Euler equations \eqref{eq:E1}--\eqref{eq:E4} is real-analytic for all time $t<T_*$, where $T_* \in (0,\infty]$ denotes the maximal time of existence of the $H^r$-solution. Moreover, the uniform
radius of space analyticity $\tau(t)$ of $u(t)$ satisfies
  \begin{align}
    \tau(t) \geq \frac{1}{C_0(1+t)} \exp\left({-C\int_{0}^{t}
    \Vert{\nabla u(s)}\Vert_{L^\infty} ds}\right),\label{eq:thm}
  \end{align}
where $C>0$ is a constant that depends only on $r$, while $C_0$ has additional dependence on $u_0$ as described in \eqref{eq:C0} below.
\end{theorem}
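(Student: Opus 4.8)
The plan is to work with an analytic norm built out of a weighted sum of Sobolev norms of tangential and normal derivatives, of the form $\sum_{m \geq 0} \frac{\tau(t)^{m}}{(m!)} M_m \Vert \partial^{m} u(t) \Vert_{H^r}$ (with suitable combinatorial weights $M_m$ respecting the structure of the half-space problem as announced in the introduction), and to differentiate this quantity in time along the Euler flow. Differentiating the equation \eqref{eq:E1} $m$ times and pairing with $\partial^m u$ in $H^r$, the linear term $\partial_t$ produces $\frac{d}{dt}$ of the norm together with a favorable $\dot\tau(t)$-term coming from the $t$-dependence of the weight; the convection term $(u\cdot\nabla)u$ and the pressure gradient $\nabla p$ are the two nonlinear contributions that must be controlled. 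For the convection term I would invoke the commutator estimate, Lemma~\ref{lemma:commutator}, which bounds the contribution of $\partial^m\bigl((u\cdot\nabla)u\bigr) - (u\cdot\nabla)\partial^m u$ by a constant (depending on $r$) times $\Vert\nabla u\Vert_{L^\infty}$ times the analytic norm, plus a term that is absorbed by the good $\dot\tau$-term; the pure transport piece $(u\cdot\nabla)\partial^m u$ is handled by integration by parts using $\ddiv u = 0$ and the boundary condition \eqref{eq:E3}, which kills the boundary term since $u\cdot n = 0$ on $\partial\hhh$.

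The pressure term is the heart of the matter and is where I expect the main obstacle to lie. The difficulty is that $\nabla p$ must be shown to have the \emph{same} analyticity radius as $u$, and $p$ is only defined implicitly through the Neumann problem \eqref{eq:P1}--\eqref{eq:P20}; one cannot afford the loss of radius that a naive elliptic-regularity iteration (à la Lions--Magenes) would incur. I would use Lemma~\ref{lemma:pressure}, which supplies higher-order elliptic estimates for $p$ in the weighted analytic norm: applying $\partial^m$ to \eqref{eq:P1}, one gets $-\Delta(\partial^m p) = \partial^m(\partial_j u_i \partial_i u_j)$ with a Neumann boundary condition obtained by differentiating \eqref{eq:P20} tangentially (normal derivatives of $p$ on the boundary being recovered from the equation itself, which is exactly why the weights on normal versus tangential derivatives must be chosen carefully). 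The right-hand side is quadratic in $\nabla u$, so by the algebra/product structure of the analytic norm it is controlled by $\Vert\nabla u\Vert_{L^\infty}$ times the analytic norm of $u$; the elliptic estimate then transfers this to $\Vert \nabla\partial^m p\Vert_{H^r}$ with the correct weights, at the cost of a constant depending on $r$ only. Combining the pieces yields a differential inequality of the schematic form
\begin{align}
  \frac{d}{dt}\, \Vert u(t)\Vert_{\mathrm{an},\tau(t)} \leq \Bigl(\dot\tau(t) + C\Vert\nabla u(t)\Vert_{L^\infty}\Bigr)\, \Vert u(t)\Vert_{\mathrm{an},\tau(t)} + (\text{lower order}),\nonumber
\end{align}
valid as long as $\tau(t)>0$.

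Finally, to extract the explicit lower bound \eqref{eq:thm}, I would \emph{choose} $\tau(t)$ to solve (or majorize) the ODE that makes the bracket non-positive after accounting for the algebraic-in-$t$ decay: roughly, $\dot\tau = -C\Vert\nabla u\Vert_{L^\infty}\tau - \tau/(1+t)$ forces $\tau(t) = \tau(0)(1+t)^{-1}\exp(-C\int_0^t \Vert\nabla u(s)\Vert_{L^\infty}\,ds)$, which is exactly the claimed form with $C_0 = 1/(\tau(0)\,\text{const})$ and the dependence on $u_0$ entering through the initial analytic norm and initial radius (this is where \eqref{eq:C0} comes in). With this choice the analytic norm stays finite on $[0,T_*)$ by Gr\"onwall, since $\int_0^t\Vert\nabla u\Vert_{L^\infty} < \infty$ for $t<T_*$ by the local existence theory; finiteness of the analytic norm is equivalent to uniform real-analyticity with radius at least $\tau(t)$, and since $\Vert\nabla u\Vert_{L^\infty} \lesssim \Vert\curl u\Vert_{H^r}$ for $r>9/2$ one also recovers the stated algebraic dependence on the Sobolev norm. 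The only remaining point is a standard approximation/continuity argument: one first establishes the bound for the (smooth, by \cite{FFT}) solution with a mollified or slightly-reduced radius and then passes to the limit, ensuring the a priori estimate is genuinely closed.
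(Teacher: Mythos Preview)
Your outline correctly identifies the overall strategy---differentiate an analytic norm in time, invoke Lemmas~\ref{lemma:commutator} and~\ref{lemma:pressure} for the commutator and pressure contributions, and choose $\tau(t)$ to absorb the bad terms---but there is a genuine gap in how you characterize the output of those lemmas and, consequently, in your derivation of the $(1+t)^{-1}$ factor. You assert that the commutator and pressure contributions are bounded by ``$C\Vert\nabla u\Vert_{L^\infty}$ times the analytic norm'' plus absorbable terms, leading to the linear schematic inequality $\frac{d}{dt}\Vert u\Vert_{\mathrm{an}}\le(\dot\tau+C\Vert\nabla u\Vert_{L^\infty})\Vert u\Vert_{\mathrm{an}}+\text{l.o.t.}$ This is not what the lemmas give. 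The coefficients $\mathcal{C}_2$ and $\mathcal{P}_2$ multiplying $\Vert u\Vert_{Y_\tau}$ contain, beyond $\tau|u|_{1,\infty}$, the terms $\tau^2|u|_{2,\infty}+\tau^3|u|_{3,\infty}$ and, crucially, the \emph{nonlinear} term $\tau^{3/2}\Vert u\Vert_{X_\tau}$. The resulting condition on $\tau$ is therefore
\[
\dot\tau + C\tau\Vert\nabla u\Vert_{L^\infty} + C(\tau^2+\tau^3)\Vert u\Vert_{H^r} + C\tau^{3/2}\Vert u\Vert_{X_{\tau}} \le 0,
\]
which couples $\tau$ back to the analytic norm itself. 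The algebraic decay is not obtained by inserting $-\tau/(1+t)$ into the ODE by hand as you do; it emerges from this nonlinear coupling. Once the above condition is enforced, one first obtains $\Vert u\Vert_{X_\tau}\le M(t)=\Vert u_0\Vert_{X_{\tau(0)}}+C\int_0^t\Vert u\Vert_{H^r}^2\,ds$, which grows at most linearly in $t$; substituting $M(t)$ back into the condition yields a Bernoulli-type ODE for $\tau$ whose explicit solution carries the $(1+t)^{-1}$ factor (this is precisely the computation leading to \eqref{eq:C0}). Your purely linear differential inequality would deliver only the exponential factor.

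A secondary but important structural point: the favorable $\dot\tau$-term multiplies the ``one-derivative-stronger'' norm $\Vert u\Vert_{Y_\tau}$, not $\Vert u\Vert_{X_\tau}$, and the bad contributions from $\mathcal{C}$ and $\mathcal{P}$ are likewise organized as $(\text{coefficient})\cdot\Vert u\Vert_{Y_\tau}$. The cancellation therefore happens at the $Y_\tau$ level, after which only the lower-order pieces $\mathcal{C}_1,\mathcal{P}_1\lesssim(1+\tau^2)\Vert u\Vert_{H^r}^2$ survive on the right-hand side. Conflating $X_\tau$ and $Y_\tau$ as a single ``analytic norm'' obscures why the argument closes.
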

\begin{remark}
  The lower bound \eqref{eq:thm} improves the rate of decay from Bardos and Benachour \cite{BB} on a bounded domain (which can be inferred to be proportional to $\exp{\int_{0}^{t} \Vert u(s) \Vert_{H^r} ds}$), and it matches the rate of decay we obtained in \cite{KV} on the periodic domain.
\end{remark}
\begin{remark}
  The proof of Theorem~\ref{thm:main} also works in the case of the half-plane (recall that in two dimensions $T_*$ may be taken arbitrarily large, cf.~\cite{MB,Y1}) with the same lower bound on the radius of analyticity of the solution. Since in two dimensions $\Vert \nabla u(t) \Vert_{L^\infty}$ grows at a rate of $C \exp(Ct)$, for some positive constant $C$, the estimate \eqref{eq:thm} shows that the rate of decay of the analyticity radius is at least $C \exp(- C \exp(Ct))$, for some $C>0$. This recovers the two-dimensional rate of decay obtained by Bardos, Benachour and Zerner \cite{BBZ} on a bounded domain and by the authors of this paper on the torus \cite{KV}. It would be interesting if one could prove a similar lower bound to \eqref{eq:thm} but where the quantity $\int_{0}^{t} \Vert \nabla u(s) \Vert_{L^\infty}\, ds$ is replaced by $\int_{0}^{t} \Vert \curl u(s) \Vert_{L^\infty}\, ds$. In particular, such an estimate would imply in two dimensions that the radius of analyticity decays as a single exponential in time.
\end{remark}
Recall (cf.~\cite{LM}) that a smooth function $v$ is uniformly of Gevrey-class $s$, with $s\geq 1$, if there exist $M,\tau >0$ such that
\begin{align}\label{eq:gevrey}
  |\partial^\alpha v(x)| \leq M
      \frac{|\alpha|!^s}{\tau^{|\alpha|}},
\end{align} for all $x\in \hhh$ and all multi-indices $\alpha \in {\mathbb N}_{0}^3$. When $s=1$ we recover the class of real-analytic functions, and for $s\in(1,\infty)$ these functions are $C^\infty$ smooth but might not be analytic. We call the constant $\tau$ in \eqref{eq:gevrey} the radius of Gevrey-class regularity. The following theorem shows the persistence of the Gevrey-class regularity for the Euler equations in a half-space.
\begin{theorem}
\label{thm:gevrey}
Fix $r>9/2$. Let $u_0$ be uniformly of Gevrey-class $s$ on $\hhh$, with $s>1$, and divergence-free. Then the unique $H^r$-solution $u(t)$ of the initial value problem \eqref{eq:E1}--\eqref{eq:E4} on $[0,T_*)$ is of Gevrey-class $s$, for all $t<T_*$, and the radius $\tau(t)$ of Gevrey-class regularity of the solution satisfies the lower bound \eqref{eq:thm}.
\end{theorem}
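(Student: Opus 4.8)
The plan is to rerun the proof of Theorem~\ref{thm:main} essentially verbatim, replacing in the definition of the analytic norm $X_{\tau}$ of Section~\ref{sec:proof} each factorial weight $1/m!$ by the Gevrey weight $1/(m!)^{s}$, and calling the resulting norm $Y_{\tau}$; the polynomial-in-$m$ factors in the norm and its Sobolev structure are left unchanged. Two elementary observations show that this substitution costs nothing. First, the $\tau$-derivative of the Gevrey weight produces the same prefactor as in the analytic case, $\partial_{\tau}\bigl(\tau^{m}/(m!)^{s}\bigr) = (m/\tau)\,\tau^{m}/(m!)^{s}$, so the ``gain of one derivative'' term generated by $\dot\tau<0$ in the estimate for $\tfrac{d}{dt}\|u(t)\|_{Y_{\tau(t)}}$ carries exactly the $m/\tau$ weight that is used in the proof of Theorem~\ref{thm:main} to absorb the top-order contributions of the convection and pressure terms. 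Second, since $k!\,(m-k)!\le m!$ for $0\le k\le m$, we have
\[
  \binom{m}{k}\,\frac{(k!)^{s}\,((m-k)!)^{s}}{(m!)^{s}} \;=\; \left(\frac{k!\,(m-k)!}{m!}\right)^{s-1} \;\le\; 1 \qquad\text{for all } s\ge 1,
\]
so every multinomial factor arising from a Leibniz expansion of the convection term $(u\cdot\nabla)u$ or of the right-hand side $\partial_{j}u_{i}\,\partial_{i}u_{j}$ of \eqref{eq:P1} is no larger with the weights of $Y_{\tau}$ than with those of $X_{\tau}$ (recall $\binom{\alpha}{\beta}\le\binom{|\alpha|}{|\beta|}$). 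Hence the Gevrey analogues of Lemma~\ref{lemma:commutator} and Lemma~\ref{lemma:pressure} should hold with constants that may be taken independent of $s\ge 1$.

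I would then verify the Gevrey versions of Lemma~\ref{lemma:commutator} and Lemma~\ref{lemma:pressure} by inspecting their proofs. In the commutator estimate the only term not distributed between two copies of the norm is the one in which all derivatives fall on a single factor of $u$, which is bounded by $\|\nabla u\|_{L^{\infty}}$ times the matching level of the norm precisely as before; every other term acquires the harmless factor displayed above together with Sobolev product inequalities that are blind to the value of $s$. For the pressure one solves the Neumann problem \eqref{eq:P1}--\eqref{eq:P20}, differentiates it, and propagates the weights of $Y_{\tau}$ through the elliptic regularity estimate and the attendant redistribution of derivatives between $u$ and $p$. This is the step I expect to require the most bookkeeping: one must confirm that the restrictions on the polynomial Sobolev weights imposed by the elliptic/hyperbolic nature of the boundary value problem (the reason for the particular choice of analytic norm in Section~\ref{sec:proof}) do not interact with the change $m!\mapsto(m!)^{s}$. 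This is indeed the case, because those restrictions concern only the polynomial factors, and the argument of Section~\ref{sec:pressure} (following \cite{KV}) counts and transfers real derivatives rather than using complex extensions, so the factorial weights enter only through the inequality above and only ever improve the convergence of the series defining the norm.

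Granting the Gevrey analogues of the two lemmas, the differential inequality for $y(t):=\|u(t)\|_{Y_{\tau(t)}}$ is formally identical to the one obtained in the proof of Theorem~\ref{thm:main}, with coefficients governed by $\|u(t)\|_{H^{r}}$, provided $\tau$ is chosen so that $\dot\tau/\tau \le -C\|\nabla u\|_{L^{\infty}} - 1/(1+t)$. Taking $\tau(t)$ equal to the right-hand side of \eqref{eq:thm}, which saturates this constraint and has $\tau(0)=1/C_{0}$ with $C_{0}$ chosen (cf.~\eqref{eq:C0}) so that $y(0)=\|u_{0}\|_{Y_{1/C_{0}}}<\infty$ — precisely the content of the hypothesis that $u_{0}$ is uniformly of Gevrey-class $s$ on $\hhh$ — and integrating the inequality using that $\|u(t)\|_{H^{r}}$ is finite on $[0,T_{*})$ by the local existence theory, we obtain $y(t)<\infty$ for all $t<T_{*}$ with the same constants $C$ and $C_{0}$ as in Theorem~\ref{thm:main}. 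Translating the bound $y(t)<\infty$ into the pointwise estimate \eqref{eq:gevrey} via Sobolev embedding (after shrinking $\tau(t)$ by a universal factor if necessary, exactly as in the analytic case) shows that $u(t)$ is uniformly of Gevrey-class $s$ on $\hhh$ with radius of Gevrey-class regularity bounded below by \eqref{eq:thm}, which is the assertion of the theorem.
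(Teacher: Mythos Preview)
Your approach is essentially identical to the paper's, and the key algebraic observation you isolate---that $\binom{m}{k}\,(k!)^{s}((m-k)!)^{s}/(m!)^{s}=\binom{m}{k}^{1-s}\le 1$ for $s\ge 1$---is exactly what makes Lemmas~\ref{lemma:commutator} and~\ref{lemma:pressure} go through uniformly in $s$. One correction: there is nothing to ``replace,'' since the norms $X_\tau$ and $Y_\tau$ in Section~\ref{sec:proof} are already defined with the Gevrey weight $1/(m-3)!^{s}$ for general $s\ge 1$, and the paper proves Theorems~\ref{thm:main} and~\ref{thm:gevrey} simultaneously; also note that $Y_\tau$ in the paper denotes the auxiliary ``one-derivative-stronger'' norm, not a renamed $X_\tau$, and the closing ODE condition on $\tau$ (your $\dot\tau/\tau\le -C\|\nabla u\|_{L^\infty}-1/(1+t)$) is slightly oversimplified compared to \eqref{eq:condition1}--\eqref{eq:condition2}, which carry the nonlinear term $C\tau^{3/2}\|u\|_{X_\tau}$.
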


\section{Proofs of Theorem~\ref{thm:main} and Theorem~\ref{thm:gevrey}} \label{sec:proof}
For a multi-index $\alpha = (\alpha_1,\alpha_2,\alpha_3)$ in ${\mathbb
N}_{0}^{3}$, we denote $\alpha' = (\alpha_1,\alpha_2)$. Define the
Sobolev and Lipshitz semi-norms $|\cdot|_m$ and $|\cdot|_{m,\infty}$ by
\begin{align}
  |v|_m =\sum\limits_{|\alpha| = m}
  M_\alpha
  \Vert{\partial^\alpha v}\Vert_{L^2}, \label{eq:m}
\end{align}
and
\begin{align*}
  |v|_{m,\infty} = \sum\limits_{|\alpha|=m} M_\alpha
  \Vert{\partial^\alpha v}\Vert_{L^\infty},
\end{align*}
where
\begin{align}
M_\alpha = \frac{|\alpha'|!}{\alpha'!} = {\alpha_1 + \alpha_2
\choose \alpha_1}. \label{eq:Mdef}
\end{align}
The need for the binomial weights $M_\alpha$ in \eqref{eq:m} shall be evident in Section~\ref{sec:pressure} where we study the higher
regularity estimates associated with the Neumann problem \eqref{eq:P1}--\eqref{eq:P20}. For $s
\geq 1$ and $\tau>0$, define the space
\begin{align*}X_\tau = \{ v \in
C^\infty(\hhh):\Vert{v}\Vert_{X_\tau} < \infty\},
\end{align*} where
\begin{align*}
  \Vert{v}\Vert_{X_\tau} = \sum\limits_{m=3}^{\infty}
  |v|_m \frac{\tau^{m-3}}{(m-3)!^{s}}.
\end{align*}
Similarly let $Y_\tau = \{ v \in C^\infty(\hhh):\Vert
v\Vert_{Y_\tau} < \infty\}$, where
\begin{align*}
  \Vert{v}\Vert_{Y_\tau} = \sum\limits_{m=4}^{\infty}
  |v|_m \frac{(m-3)\tau^{m-4}}{(m-3)!^{s}} .
\end{align*}
\begin{remark}
 The above defined spaces $X_\tau$ and $Y_\tau$ can be identified with the classical Gevrey-$s$ classes as defined in \cite{LM}. On the full space or on the torus, the Gevrey-$s$ classes can also be identified with ${\mathcal D}((-\Delta)^{r/2} \exp{(\tau (-\Delta)^{1/2s})})$ (cf.~\cite{FT,KV,LO}).
\end{remark}

We shall prove Theorems~\ref{thm:main} and \ref{thm:gevrey} simultaneously by looking at the evolution equation in Gevrey-$s$ classes with $s\geq1$. If $u_0$ is of Gevrey-class $s$ in $\Omega$, with $s\geq1$, then there exists $\tau(0)>0$ such that $u_0 \in X_{\tau(0)}$, and moreover $\tau(0)$ can be chosen arbitrarily close to the uniform real-analyticity
radius of $u_0$, respectively to the radius of Gevrey-class regularity. Let $u(t)$ be the classical $H^r$-solution of the
initial value problem \eqref{eq:E1}--\eqref{eq:E4}.

With the notations of Section~\ref{sec:thm} we have an {\it a
priori} estimate
\begin{align}
  \frac{d}{dt} \Vert{u(t)}\Vert_{\Xtau} =
  \dot{\tau}(t) \Vert{u(t)}\Vert_{\Ytau} + \sum\limits_{m=3}^{\infty}
  \left(\frac{d}{dt} |u(t)|_m\right)
  \frac{\tau(t)^{m-3}}{(m-3)!^{s}}. \label{eq:ODE1}
\end{align}
Fix $m\geq 3$. In order to estimate $(d/dt) |u(t)|_m$, for each
$|\alpha|=m$ we apply $\partial^\alpha$ on \eqref{eq:E1} and take
the $L^2$-inner product with $\partial^\alpha u$. We obtain
\begin{align}
  \frac12 \frac{d}{dt} \Vert{\partial^\alpha
  u}\Vert_{L^2}^2 + < \partial^\alpha (u \cdot \nabla u),
  \partial^\alpha u> + < \nabla \partial^\alpha p, \partial^\alpha
  u> = 0. \label{eq:inner1}
\end{align}
On the second term on the left, we apply the Leibniz rule and recall
that $< u \cdot \nabla \partial^\alpha u,
\partial^\alpha u> = 0$. For the third term on the left of \eqref{eq:inner1} we note that since $n = (0,0,-1)$ and $u \cdot n = 0$ on
$\partial\hhh$, we have that $\partial^\alpha u \cdot n =0$ for all
$\alpha$ such that $\alpha_3 = 0$. Together with $\nabla \cdot u =
0$ in $\hhh$ this implies that $< \nabla
\partial^\alpha p,
\partial^\alpha u> = 0$ whenever $\alpha_3 = 0$. Using the
Cauchy-Schwarz inequality and summing over $|\alpha|=m$ we then
obtain
\begin{align*}
\frac{d}{dt} |u|_m \leq  \sum\limits_{|\alpha| =
m}\sum\limits_{\beta \leq \alpha, \beta\neq 0} M_\alpha {\alpha
\choose \beta} \Vert{\partial^\beta u \cdot \nabla
\partial^{\alpha-\beta} u}\Vert_{L^2} + \sum\limits_{|\alpha|
= m, \alpha_3\neq 0}M_\alpha \Vert{\nabla \partial^\alpha
p}\Vert_{L^2}.
\end{align*}
Combined with \eqref{eq:ODE1}, the above estimate shows that
\begin{align}
  \frac{d}{dt} \Vert{u(t)}\Vert_{\Xtau} \leq
  \dot{\tau}(t) \Vert{u(t)}\Vert_{\Ytau} + \CCC + \PPP
  \label{eq:ODE2},
\end{align}
where the upper bound on the commutator term is given by
\begin{align*}
\CCC = \sum\limits_{m=3}^{\infty} \sum\limits_{|\alpha| =
m}\sum\limits_{\beta \leq \alpha, \beta\neq 0} M_\alpha {\alpha
\choose \beta} \Vert{\partial^\beta u \cdot \nabla
\partial^{\alpha-\beta} u}\Vert_{L^2}
\frac{\tau^{m-3}}{(m-3)!^{s}},
\end{align*}
and the upper bound on the pressure term is
\begin{align*}
\PPP = \sum\limits_{m=3}^{\infty} \sum\limits_{|\alpha| = m,
\alpha_3\neq 0}M_\alpha \Vert{\nabla \partial^\alpha p}\Vert_{L^2}
\frac{\tau^{m-3}}{(m-3)!^{s}}.
\end{align*}
In order to estimate $\CCC$ we use the following lemma, the proof of
which is given in Section~\ref{sec:commutator} below.
\begin{lemma}\label{lemma:commutator}
There exists a sufficiently large constant $C>0$ such that
\begin{align*}
  \CCC \leq C \left(\mathcal{C}_1 +
\mathcal{C}_2 \Vert{u}\Vert_{Y_\tau}\right),
\end{align*}
where
\begin{align*}
  \mathcal{C}_1 = |u|_{1,\infty} |u|_3 + |u|_{2,\infty} |u|_2 + \tau |u|_{2,\infty} |u|_3,
\end{align*}
and
\begin{align*}
  \mathcal{C}_2 = \tau |u|_{1,\infty} + \tau^{2} |u|_{2,\infty} + \tau^3 |u|_{3,\infty} + \tau^{3/2}  \Vert{u}\Vert_{X_\tau}.
\end{align*}
\end{lemma}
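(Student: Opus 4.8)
The plan is to estimate the triple sum defining $\CCC$ by splitting the inner sum over $\beta\le\alpha$ according to which of the two factors $\partial^\beta u$ or $\nabla\partial^{\alpha-\beta}u$ carries few derivatives. First I would record the two combinatorial facts that govern everything: the binomial weight is submultiplicative in the sense that $M_\alpha \le M_\beta M_{\alpha-\beta}$ (since $\binom{\alpha_1+\alpha_2}{\alpha_1}\le\binom{\beta_1+\beta_2}{\beta_1}\binom{(\alpha_1-\beta_1)+(\alpha_2-\beta_2)}{\alpha_1-\beta_1}$, a Vandermonde-type inequality on the first two components), and the Gevrey coefficient factor is likewise essentially submultiplicative, $\binom{\alpha}{\beta}\frac{(m-3)!^{-s}}{1}$ can be compared to the corresponding factors for $|\beta|$ and $|\alpha-\beta|$ after reindexing $k=|\beta|$, $m-k=|\alpha-\beta|$. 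In the $L^2$ norm I would bound $\Vert \partial^\beta u\cdot\nabla\partial^{\alpha-\beta}u\Vert_{L^2}$ by $\Vert\partial^\beta u\Vert_{L^\infty}\Vert\nabla\partial^{\alpha-\beta}u\Vert_{L^2}$ when $|\beta|$ is small (so the $L^\infty$ norm is a low semi-norm $|u|_{|\beta|,\infty}$), and by $\Vert\partial^\beta u\Vert_{L^2}\Vert\nabla\partial^{\alpha-\beta}u\Vert_{L^\infty}$ when $|\alpha-\beta|$ is small. The delicate middle range, where neither factor is low-order, is handled by the Sobolev embedding $H^2(\hhh)\hookrightarrow L^\infty(\hhh)$ applied to whichever factor is being placed in $L^\infty$, which costs two derivatives and is exactly why $r>9/2$ (equivalently, why the sums in $X_\tau$, $Y_\tau$ start at $m=3$, $m=4$) is the right threshold.

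Concretely I would partition $\beta\le\alpha$, $\beta\ne 0$, into: (i) $|\beta|\in\{1,2,3\}$; (ii) $|\alpha-\beta|\in\{0,1,2\}$; and (iii) $|\beta|\ge 3$ and $|\alpha-\beta|\ge 3$. Cases (i) and (ii) produce, after summing the geometric-type series in $\tau$, precisely the low-high terms $|u|_{1,\infty}|u|_3$, $|u|_{2,\infty}|u|_2$, $\tau|u|_{2,\infty}|u|_3$ collected in $\mathcal{C}_1$, together with the terms $\tau|u|_{1,\infty}\Vert u\Vert_{Y_\tau}$, $\tau^2|u|_{2,\infty}\Vert u\Vert_{Y_\tau}$, $\tau^3|u|_{3,\infty}\Vert u\Vert_{Y_\tau}$ from $\mathcal{C}_2$; here the $(m-3)$ weight and the shift in the exponent of $\tau$ in the definition of $Y_\tau$ are exactly what is needed to absorb $\binom{\alpha}{\beta}$ against $(m-3)!/((k)!\,(m-k)!)$ for the high factor. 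In case (iii) both $\partial^\beta u$ and $\partial^{\alpha-\beta}u$ are "high" (index $\ge 3$); I would put the smaller of the two in $L^\infty$ via $H^2\hookrightarrow L^\infty$, pay $\tau^{3/2}$ coming from the mismatch between the $(m-3)!^{-s}$ normalization at level $\ge 3$ and the extra two derivatives, and recognize the resulting double sum as $\Vert u\Vert_{X_\tau}\cdot\Vert u\Vert_{Y_\tau}$, which contributes the term $\tau^{3/2}\Vert u\Vert_{X_\tau}$ inside $\mathcal{C}_2$. Throughout, the point is that $M_\alpha$ and the factorial weights conspire so that the combinatorial coefficient $M_\alpha\binom{\alpha}{\beta}$ never produces a factor growing faster than what the Gevrey denominators can absorb — this is the structural reason the binomial weights $M_\alpha$ were built into the $|\cdot|_m$ semi-norm in the first place.

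The main obstacle, and the step I would spend the most care on, is the combinatorial bookkeeping in case (iii): one must check that for $3\le k\le m-3$ the quantity $M_\alpha\binom{\alpha}{\beta}\frac{(k-3)!^s\,(m-k-3)!^s}{(m-3)!^s}$ is bounded (uniformly in $\alpha$, $\beta$, $m$, and $s\ge 1$) by a constant times an $s$-independent power of a fixed number, after one has absorbed the two "lost" derivatives of the Sobolev embedding into a factor of $\tau^{3/2}$ — the half-power arising because the embedding constant and the mismatch $(k)!^s/(k-3)!^s \sim k^{3s}$ must be dominated, for the worst split, by the gain $\tau^{(k-3)+(m-k-3)-(m-3)}\cdot(\text{lower-order-in-}k)$ bookkeeping. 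A clean way to organize this is to first prove the scalar inequality $M_\alpha\binom{\alpha}{\beta}\le C\, M_\beta M_{\alpha-\beta}\binom{m}{k}$ with $k=|\beta|$, then reduce to the one-dimensional estimate $\binom{m}{k}/\binom{m-3}{k-3}$-type ratios, which are polynomial in $m$ of degree $3$ and hence harmless against the factorial denominators as soon as one is willing to give up an arbitrarily small power of $\tau$ (absorbed, for $\tau$ in a bounded range, into the constant $C$). The low-high cases (i)--(ii) are comparatively routine geometric-series manipulations once this combinatorial lemma is in hand.
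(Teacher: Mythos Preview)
Your overall architecture matches the paper: split $\CCC$ according to $j=|\beta|$ into low, high, and middle ranges, use H\"older with the low-derivative factor in $L^\infty$ in the extreme ranges, and appeal to a combinatorial inequality comparing $M_\alpha\binom{\alpha}{\beta}$ to $M_\beta M_{\alpha-\beta}\binom{m}{j}$. Two points, however, need correction.

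First, the inequality $M_\alpha\le M_\beta M_{\alpha-\beta}$ that you state at the outset is false: for $\alpha'=(2,2)$, $\beta'=(1,1)$ one has $M_\alpha=\binom{4}{2}=6>4=M_\beta M_{\alpha-\beta}$. The correct statement, which you do write later and which is the paper's Lemma~\ref{lemma:choose}, is $M_\alpha\binom{\alpha}{\beta}\le M_\beta M_{\alpha-\beta}\binom{|\alpha|}{|\beta|}$. This is a minor slip.

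The substantive gap is in case (iii). You propose to handle the middle range $3\le j\le m-3$ via the embedding $H^2(\hhh)\hookrightarrow L^\infty(\hhh)$ applied to the smaller factor, and you assert this yields the prefactor $\tau^{3/2}$. It does not. The embedding gives $\Vert\partial^\beta u\Vert_{L^\infty}\le C(\Vert\partial^\beta u\Vert_{L^2}+\Vert\Delta\partial^\beta u\Vert_{L^2})$, and the first summand $\Vert\partial^\beta u\Vert_{L^2}\sim|u|_j$ carries combinatorial weight
\[
\binom{m}{j}\,\frac{(j-3)!^{s}(m-j-2)!^{s}}{(m-3)!^{s}(m-j-2)}
\;=\;\binom{m}{j}\binom{m-3}{j-3}^{-s}\frac{1}{m-j-2},
\]
which for $s=1$ and $j$ near $3$ behaves like $m^2$ and is therefore unbounded in $m$; no power of $\tau$ rescues this. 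The paper instead uses the Gagliardo--Nirenberg interpolation
\[
\Vert\partial^\beta u\Vert_{L^\infty}\le C\,\Vert\partial^\beta u\Vert_{L^2}^{1/4}\Vert\Delta\partial^\beta u\Vert_{L^2}^{3/4},
\]
so that the relevant factorial is $(j-3)!^{s/4}(j-1)!^{3s/4}$ rather than $(j-3)!^{s}$. This particular $1/4$--$3/4$ split is exactly what makes the resulting coefficient $\mathcal{A}_{\alpha,\beta,s}\le C\binom{m-3}{j-1}^{1-s}j^{-1-s/2}$ bounded (indeed summable in $j$), and the exponent count $\tfrac14(j-3)+\tfrac34(j-1)+(m-j-3)=m-\tfrac92$ is what produces the leftover $\tau^{(m-3)-(m-9/2)}=\tau^{3/2}$. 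Your heuristic for the $3/2$ (``half-power arising because\dots'') does not capture this mechanism; without the interpolated form and the subsequent discrete H\"older inequality, the middle range cannot be closed with a uniform constant.
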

The following lemma shall be used to estimate $\PPP$. The proof is
given in Section~\ref{sec:pressure} below.
\begin{lemma}\label{lemma:pressure}
There exists a sufficiently large constant $C>0$ such that
\begin{align*}
  \PPP  \leq C \left(\PPP_1 + \PPP_2
\Vert{u}\Vert_{Y_\tau}\right),
\end{align*}
where
\begin{align*}
  \PPP_1 = |u|_{1,\infty} |u|_3 +  |u|_{2,\infty} |u|_2 +  \tau |u|_{2,\infty} |u|_3 +  \tau^2 |u|_{3,\infty} |u|_3,
\end{align*}
and
\begin{align*}
  \PPP_2 = \tau |u|_{1,\infty} + \tau^2 |u|_{2,\infty} + \tau^3 |u|_{3,\infty} + \tau^{3/2} \Vert{u}\Vert_{X_\tau}.
\end{align*}
\end{lemma}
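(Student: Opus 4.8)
The plan is to derive higher-order energy estimates for $\nabla \partial^\alpha p$ directly from the Neumann problem \eqref{eq:P1}--\eqref{eq:P20}, and then assemble them into the Gevrey sum defining $\PPP$. First I would fix a multi-index $\alpha$ with $|\alpha| = m$ and $\alpha_3 \neq 0$, and estimate $\Vert \nabla \partial^\alpha p \Vert_{L^2}$ using elliptic regularity for the Neumann problem. The key point is that because $\alpha_3 \neq 0$ we may write $\partial^\alpha = \partial_3 \partial^{\tilde\alpha}$ with $|\tilde\alpha| = m-1$; applying $\partial^{\tilde\alpha}$ to \eqref{eq:P1} and using the homogeneous Neumann condition \eqref{eq:P20} together with the tangential derivatives gives an $H^2$-type bound of the form $\Vert \nabla \partial^\alpha p \Vert_{L^2} \lesssim \Vert \partial^{\tilde\alpha}(\partial_j u_i \partial_i u_j) \Vert_{L^2} + (\text{lower-order boundary contributions})$. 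This is where the binomial weights $M_\alpha = \binom{\alpha_1 + \alpha_2}{\alpha_1}$ matter: one must track how $\partial^{\tilde\alpha}$ distributes over the product $\partial_j u_i \partial_i u_j$ via the Leibniz rule, and the weights $M_\alpha$ are chosen precisely so that the combinatorial factors ${\tilde\alpha \choose \beta}$ combine with $M_{\tilde\alpha}$ in a summable way; the elliptic gain of one derivative costs a tangential index, so only the tangential part of the multi-index is weighted, and the transfer of a $\partial_3$ onto the pressure must be absorbed cleanly.

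Next I would insert the Leibniz expansion into the sum defining $\PPP$ and split the double sum over $\beta \leq \tilde\alpha$ according to how many derivatives fall on each factor. The terms where at most two or three derivatives land on one factor are controlled by the Lipschitz seminorms $|u|_{1,\infty}, |u|_{2,\infty}, |u|_{3,\infty}$ times an $L^2$ Sobolev seminorm, and after resumming the Gevrey series (shifting the index $m \mapsto m-1$, which produces the factors $\tau, \tau^2, \tau^3$ and a ratio of factorials of the form $(m-4)!^s/(m-3)!^s \leq 1$) these produce exactly the terms in $\PPP_1$. The remaining ``bulk'' terms, where both factors carry a comparable high number of derivatives, are estimated by putting one factor in $L^\infty$ via a Sobolev embedding — this is the source of the $\tau^{3/2} \Vert u \Vert_{X_\tau}$ contribution, since bounding $\Vert \partial^\gamma u \Vert_{L^\infty} \lesssim \Vert \partial^\gamma u \Vert_{H^2}$ shifts the index by $2$ but one must also account for the $3$-index offset in the definition of $X_\tau$ — and the other factor, carrying $m - O(1)$ derivatives, is summed against the weight $(m-3)\tau^{m-4}/(m-3)!^s$ to yield $\Vert u \Vert_{Y_\tau}$; these give the terms in $\PPP_2 \Vert u \Vert_{Y_\tau}$.

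The main obstacle is the pressure elliptic estimate itself: one must show that differentiating the Neumann problem $m-1$ times and inverting the Laplacian loses only \emph{tangential} regularity (so that the weights $M_\alpha$ remain summable) while still closing at the level of $\nabla \partial^\alpha p$ rather than $\partial^\alpha p$, and that the boundary terms generated by integration by parts — which a priori involve traces of derivatives of $u$ on $\partial\hhh$ — either vanish (using $\partial^\alpha u \cdot n = 0$ when $\alpha_3 = 0$, and the homogeneous Neumann data) or can be bounded by the interior quantities via a trace inequality with the correct weight. Controlling the interplay between the normal derivative count, the tangential weight $M_\alpha$, and the Gevrey factorial normalization — so that no loss of a full derivative occurs and the constant $C$ depends only on $r$ — is the delicate part; once the elliptic estimate is in hand, the resummation of the Gevrey series is parallel to the commutator estimate of Lemma~\ref{lemma:commutator} and is essentially bookkeeping.
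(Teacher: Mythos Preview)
Your overall architecture (elliptic estimate for $\nabla\partial^\alpha p$, then Leibniz expansion of $\partial^{\tilde\alpha}(\partial_j u_i\,\partial_i u_j)$, then a low/middle/high split parallel to Lemma~\ref{lemma:commutator}) is the same as the paper's, but the elliptic step as you describe it has a genuine gap.

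You propose an $H^2$-type bound of the shape $\Vert\nabla\partial^\alpha p\Vert_{L^2}\lesssim \Vert\partial^{\tilde\alpha}(\partial_j u_i\,\partial_i u_j)\Vert_{L^2}+(\text{boundary terms})$, where $\partial^\alpha=\partial_3\partial^{\tilde\alpha}$. This cannot hold with a single $\tilde\alpha$ on the right when $\tilde\alpha$ itself carries many normal derivatives: the Neumann condition is only preserved under \emph{tangential} differentiation, so you cannot simply apply $\partial^{\tilde\alpha}$ to the problem and invoke $H^2$ regularity. What the paper does instead (Lemma~\ref{lemma:neumann}) is to iterate the identity $\partial_{33}p=-\Delta' p - v$ to trade pairs of normal derivatives for the tangential Laplacian plus source terms; this yields an explicit formula of the type
\[
\partial_3\partial^\alpha p = (-\Delta')^{k+1}\partial^{\alpha'}p \;+\;\sum_{j=0}^{k}(-1)^{k-j+1}\partial_3^{2j}(\partial_{11}+\partial_{22})^{k-j}\partial^{\alpha'}v,
\]
and after one clean $H^2$ estimate on the \emph{purely tangential} Neumann problem for $(-\Delta')^{k}\partial^{\alpha'}p$, one gets
$\Vert\partial_3\partial^\alpha p\Vert_{L^2}\le C\sum_{\beta}{s+t\choose s}\Vert\partial^\beta v\Vert_{L^2}$, a \emph{weighted sum over many} $\beta$ with $|\beta|=m-1$ and $\beta'-\alpha'=(2s,2t)$. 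There are no leftover boundary terms and no trace inequalities---the spurious ``boundary contributions'' in your plan disappear precisely because only tangential derivatives are ever applied to the boundary data.

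The second point you miss is that these binomial weights ${s+t\choose s}$, coming from expanding $(\partial_{11}+\partial_{22})^{k-j}$, must then be summed against the $M_\alpha$ weights. This is the content of the combinatorial Lemma~\ref{lemma:*}, which shows that
$\sum_{s,t}{\beta_1+\beta_2-2s-2t\choose \beta_1-2s}{s+t\choose s}\le C\,m\,M_\beta$,
so that only a single factor of $m$ (harmless after division by $(m-3)!^s$) is lost. This is exactly where the choice $M_\alpha={\alpha_1+\alpha_2\choose\alpha_1}$ is used---not in the Leibniz expansion as you suggest, but in absorbing the elliptic transfer of normal to tangential derivatives. Once the estimate $\PPP\le C\sum_m\sum_{|\beta|=m-1}M_\beta\Vert\partial^\beta(\partial_i u_k\,\partial_k u_i)\Vert_{L^2}\,m\tau^{m-3}/(m-3)!^s$ is in hand, your resummation sketch is correct and matches the paper.
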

Let $r>9/2$ be fixed. The Sobolev embedding theorem, the two
lemmas above, and \eqref{eq:ODE2} imply
\begin{align}
  & \frac{d}{dt} \Vert{u(t)}\Vert_{X_{\tau(t)}} \leq
  \dot{\tau}(t) \Vert{u(t)}\Vert_{Y_{\tau(t)}} + C
  \Vert{u(t)}\Vert_{H^r}^2 ( 1+ \tau(t)^2) \notag\\
  &\ + C
  \Vert{u(t)}\Vert_{Y_{\tau(t)}} \left( \tau(t)
  \Vert{\nabla u(t)}\Vert_{L^\infty} + (\tau(t)^2 + \tau(t)^3)
  \Vert{u(t)}\Vert_{H^r} + \tau(t)^{3/2}
  \Vert{u(t)}\Vert_{X_{\tau(t)}}\right). \label{eq:ODE3}
\end{align}
If $\tau(t)$ decreases fast enough so that for all $0 \leq t < T_*$
we have
\begin{align} \label{eq:condition1}
\dot{\tau}(t) + C \tau(t)
  \Vert{\nabla u(t)}\Vert_{L^\infty} + C (\tau(t)^2 + \tau(t)^3)
  \Vert{u(t)}\Vert_{H^r} + C \tau(t)^{3/2}
  \Vert{u(t)}\Vert_{X_{\tau(t)}} \leq 0,
\end{align}
then \eqref{eq:ODE3} implies that
\begin{align*}
\frac{d}{dt} \Vert{u(t)}\Vert_{X_{\tau(t)}} \leq C
  \Vert{u(t)}\Vert_{H^r}^2 ( 1+ \tau(0)^2),
\end{align*}
and therefore
\begin{align*}
\Vert{u(t)}\Vert_{X_{\tau(t)}} \leq \Vert{u_0}\Vert_{X_{\tau(0)}} +
C_{\tau(0)} \int_{0}^{t} \Vert{u(s)}\Vert_{H^r}^2 ds = M(t),
\end{align*}
for all $0\leq t < T_*$, where $C_{\tau(0)} =1+
\tau(0)^2$. Since $\tau$ must be chosen to be a decreasing function,
a sufficient condition for \eqref{eq:condition1} to hold is that
\begin{align}
\dot{\tau}(t) + C \tau(t)
  \Vert{\nabla u(t)}\Vert_{L^\infty} + C \tau(t)^{3/2}
  \left(
  C_{\tau(0)}' \Vert{u(t)}\Vert_{H^r} +
  M(t)\right) \leq 0, \label{eq:condition2}
\end{align}
where $C_{\tau(0)}' = \tau(0)^{1/2} + \tau(0)^{3/2}$. For simplicity
of the exposition we denote
\begin{align*}
  G(t) = \exp\left( C \int_{0}^{t} \Vert{\nabla
  u(s)}\Vert_{L^\infty} ds\right),
\end{align*}
where the constant $C>0$ is taken sufficiently large so that
$\Vert{u(t)}\Vert_{H^r}^2 \leq \Vert{u_0}\Vert_{H^r}^2 G(t)$. It
then follows that \eqref{eq:condition2} is satisfied if we let
\begin{align*}
\tau(t) = G(t)^{-1/2} \left( \tau(0)^{-1/2} + C \int_{0}^{t} \left(
C_{\tau(0)}' \Vert{u(s)}\Vert_{H^r} + M(s)\right) G(s)^{-1}
ds\right)^{-1/2}.
\end{align*}
The lower bound \eqref{eq:thm} on the
radius of analyticity stated in Theorem~\ref{thm:main} is then
obtained by noting that
\begin{align}
&\tau(0)^{-1/2} + C \int_{0}^{t} \left( C_{\tau(0)}'
\Vert{u(s)}\Vert_{H^r} + M(s)\right) G(s)^{-1} ds \notag
\\
& \qquad \qquad \leq \tau(0)^{-1/2} + C \int_{0}^{t} \left(
C_{\tau(0)}' \Vert{u_0}\Vert_{H^r} + \Vert{u_0}\Vert_{X_{\tau(0)}} +
s C_{\tau(0)}
\Vert{u_0}\Vert_{H^r} ^2\right) ds\notag\\
& \qquad \qquad \leq C_0 (1+t)^2, \label{eq:C0}
\end{align}
and therefore
\begin{align*}
\tau(t) \geq G(t)^{-1/2} \frac{C_0}{1+t}.
\end{align*} The last inequality in \eqref{eq:C0} above gives the
explicit dependence of $C_0$ on $u_0$. This concludes the {\it a
priori} estimates that are used to prove Theorem~\ref{thm:main}. The proof can be made formal by considering an approximating solution $u^{(n)}$, $n\in {\mathbb N}$, proving the above estimates for $u^{(n)}$, and then taking the limit as $n\rightarrow \infty$. We omit these details.

\section{The commutator estimate} \label{sec:commutator}
Before we prove Lemma~\ref{lemma:commutator} we state and prove two
useful lemmas about multi-indexes, that will be used throughout in
Sections~\ref{sec:commutator} and \ref{sec:pressure} below.
\begin{lemma}\label{lemma:choose}
We have
\begin{align}
{\alpha \choose \beta} M_{\alpha}  M_{\beta}^{-1}
M_{\alpha-\beta}^{-1} \leq {|\alpha| \choose |\beta|}
 \label{eq:choose}
\end{align}
for all $\alpha,\beta \in \mathbb{N}_0^3$ with $\beta \leq \alpha$.
\end{lemma}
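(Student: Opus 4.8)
The plan is to unwind every combinatorial factor in terms of factorials and reduce the claimed inequality to a comparison between two products of binomial coefficients, one involving only the first two components of the multi-indices and one involving only the third component. Writing $\alpha=(\alpha',\alpha_3)$ and $\beta=(\beta',\beta_3)$ with $\alpha'=(\alpha_1,\alpha_2)$, $\beta'=(\beta_1,\beta_2)$, we have $\binom{\alpha}{\beta}=\binom{\alpha'}{\beta'}\binom{\alpha_3}{\beta_3}$, where $\binom{\alpha'}{\beta'}=\binom{\alpha_1}{\beta_1}\binom{\alpha_2}{\beta_2}$, and by the definition \eqref{eq:Mdef} we have $M_\alpha = |\alpha'|!/\alpha'!$, $M_\beta = |\beta'|!/\beta'!$, $M_{\alpha-\beta}=|\alpha'-\beta'|!/(\alpha'-\beta')!$. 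Substituting these, the factor $\binom{\alpha'}{\beta'}=\alpha'!/(\beta'!(\alpha'-\beta')!)$ cancels all the $\alpha'!$, $\beta'!$, $(\alpha'-\beta')!$ terms, and the left side of \eqref{eq:choose} collapses to
\[
\binom{\alpha_3}{\beta_3}\,\frac{|\alpha'|!}{|\beta'|!\,|\alpha'-\beta'|!}
= \binom{\alpha_3}{\beta_3}\binom{|\alpha'|}{|\beta'|}.
\]
So the lemma is equivalent to the inequality $\binom{\alpha_3}{\beta_3}\binom{|\alpha'|}{|\beta'|}\le\binom{|\alpha|}{|\beta|}$.

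To finish I would invoke the standard Vandermonde-type superadditivity of binomial coefficients: for nonnegative integers $a,b,c,d$ with $b\le a$ and $d\le c$ one has $\binom{a}{b}\binom{c}{d}\le\binom{a+c}{b+d}$. This follows because $\binom{a}{b}\binom{c}{d}$ counts the number of ways to choose $b$ elements from a set of size $a$ and $d$ elements from a disjoint set of size $c$, which is at most the number of ways to choose $b+d$ elements from the union of size $a+c$ (Vandermonde's identity $\binom{a+c}{b+d}=\sum_{k}\binom{a}{k}\binom{c}{b+d-k}$ has the term $k=b$ among its nonnegative summands). Applying this with $a=|\alpha'|$, $b=|\beta'|$, $c=\alpha_3$, $d=\beta_3$, and noting $|\alpha'|+\alpha_3=|\alpha|$ and $|\beta'|+\beta_3=|\beta|$, gives exactly $\binom{|\alpha'|}{|\beta'|}\binom{\alpha_3}{\beta_3}\le\binom{|\alpha|}{|\beta|}$, completing the proof.

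I do not anticipate a genuine obstacle here; the only point requiring a little care is the bookkeeping that shows the $\alpha'!,\beta'!,(\alpha'-\beta')!$ factors cancel exactly, i.e.\ that $M_\alpha M_\beta^{-1}M_{\alpha-\beta}^{-1}\binom{\alpha}{\beta}$ telescopes cleanly into the product of the two "collapsed" binomials. Once that identity is in place, the inequality is immediate from the combinatorial interpretation above, so the proof is short.
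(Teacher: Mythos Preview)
Your proof is correct and follows essentially the same route as the paper's: both reduce the left side to the exact identity $\binom{\alpha}{\beta} M_\alpha M_\beta^{-1} M_{\alpha-\beta}^{-1} = \binom{|\alpha'|}{|\beta'|}\binom{\alpha_3}{\beta_3}$ via the cancellation you describe, and then invoke $\binom{n}{i}\binom{m}{j}\le\binom{n+m}{i+j}$. The only cosmetic difference is that the paper justifies this last inequality by comparing the coefficient of $x^{i+j}$ in $(1+x)^n(1+x)^m=(1+x)^{n+m}$, while you phrase it via Vandermonde's identity and subset counting; these are the same argument.
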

\begin{proof}
Using \eqref{eq:Mdef} we have that
\begin{align*}
  {\alpha' \choose \beta'} M_{\alpha}  M_{\beta}^{-1}
M_{\alpha-\beta}^{-1} = {|\alpha'| \choose |\beta'|},
\end{align*}
and hence the left side of \eqref{eq:choose} is bounded by
\begin{align*}
{|\alpha'| \choose |\beta'|} {\alpha_3 \choose \beta_3}.
\end{align*}The lemma then follows from
\begin{align*}
  {n \choose i} {m \choose j} \leq {n+m \choose i+j},
\end{align*}
for any $n,m\geq 0$ such that $n\geq i$ and $m\geq j$, which in turn we obtain by computing the coefficient in front of $x^{i+j}$ in the binomial
expansions of $(1+x)^n (1+x)^m$ and $(1+x)^{m+n}$.
\end{proof}
The second lemma allows us to re-write certain double sums involving
multi-indices.
\begin{lemma}\label{lemma:product}
Let $\{x_\lambda\}_{\lambda\in{\mathbb N}_{0}^{3}}$ and
$\{y_\lambda\}_{\lambda \in {\mathbb N}_{0}^{3}}$ be real
numbers. Then we have
\begin{align} \label{eq:product}
  \sum\limits_{|\alpha|=m} \sum\limits_{|\beta|=j, \beta\leq \alpha} x_\beta y_{\alpha
  - \beta} =  \left( \sum\limits_{|\beta| =j} x_\beta\right) \left(
  \sum\limits_{|\gamma| = m-j} y_\gamma\right).
\end{align}
\end{lemma}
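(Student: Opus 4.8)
The plan is to prove the identity in Lemma~\ref{lemma:product} by a direct combinatorial regrouping of the double sum. The key observation is that the left-hand side is a sum over all pairs $(\alpha,\beta)$ with $|\alpha|=m$, $|\beta|=j$, and $\beta \leq \alpha$, of the product $x_\beta y_{\alpha-\beta}$. The plan is to reindex this sum by the pair $(\beta,\gamma)$ where $\gamma = \alpha - \beta$: since $\beta \leq \alpha$, the difference $\gamma$ lies in ${\mathbb N}_0^3$, and the constraint $|\alpha| = m$ together with $|\beta| = j$ forces $|\gamma| = m - j$. Conversely, given any $\beta$ with $|\beta| = j$ and any $\gamma$ with $|\gamma| = m-j$, the multi-index $\alpha := \beta + \gamma$ automatically satisfies $|\alpha| = m$ and $\beta \leq \alpha$. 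Thus the map $(\alpha,\beta) \mapsto (\beta, \alpha-\beta)$ is a bijection between the index set of the left-hand sum and the product index set $\{|\beta|=j\} \times \{|\gamma|=m-j\}$.

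First I would make this bijection explicit and note that under it the summand $x_\beta y_{\alpha-\beta}$ becomes $x_\beta y_\gamma$. Then the left-hand side equals
\begin{align*}
  \sum\limits_{|\beta|=j} \sum\limits_{|\gamma|=m-j} x_\beta y_\gamma,
\end{align*}
and since the summand factors as a product of a term depending only on $\beta$ and a term depending only on $\gamma$, the double sum splits as the product of the two single sums, which is exactly the right-hand side of \eqref{eq:product}. This is the standard "Cauchy product" type manipulation, only with multi-indices in place of natural numbers.

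I do not expect any genuine obstacle here; the lemma is purely bookkeeping. The only point that requires a moment's care is verifying that the constraint $\beta \leq \alpha$ on the left translates precisely to "no constraint" (beyond the degree conditions) on the right, i.e.\ that every pair $(\beta,\gamma)$ of the prescribed degrees does arise. This is immediate from the fact that $\beta + \gamma \in {\mathbb N}_0^3$ whenever $\beta,\gamma \in {\mathbb N}_0^3$ and that $\beta \leq \beta + \gamma$ always holds componentwise. One should also note that the identity holds formally — that is, as an identity of finite sums — so no convergence issues arise, which is why it can later be applied with $x_\lambda$ and $y_\lambda$ taken to be norms of derivatives of $u$.
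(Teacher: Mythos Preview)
Your argument is correct and is precisely the re-labeling the paper has in mind; indeed the paper omits the proof entirely, noting only that it ``consists of re-labeling of the terms on the left side of \eqref{eq:product}.'' Your bijection $(\alpha,\beta)\mapsto(\beta,\alpha-\beta)$ is exactly that re-labeling, so there is nothing to add.
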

The proof of the above lemma is omitted: it consists of
re-labeling of the terms on the left side of \eqref{eq:product}. Now
we proceed by proving the commutator estimate.
\begin{proof}[Proof of Lemma~\ref{lemma:commutator}]
We have
\begin{align*}
  \CCC = \sum\limits_{m=3}^{\infty} \sum\limits_{j=1}^{m} \CCC_{m,j},
\end{align*}
where we denoted
\begin{align}\label{eq:Cmj}
  \CCC_{m,j} =
  \frac{\tau^{m-3}}{(m-3)!^{s}} \sum\limits_{|\alpha|=m}
  \sum\limits_{|\beta|=j, \beta\leq \alpha} M_\alpha {\alpha \choose
  \beta} \Vert{\partial^\beta u \cdot \nabla \partial^{\alpha-\beta}
  u}\Vert_{L^2}.
\end{align}
We now split the right side of the above equality into seven terms according
to the values of $m$ and $j$, and prove the following estimates. For
low $j$, we claim
\begin{align}
&\sum\limits_{m=3}^{\infty} \CCC_{m,1}\leq C |u|_{1,\infty} |u|_3 +
C \tau |u|_{1,\infty} \Vert{u}\Vert_{Y_\tau}\label{eq:low1},\\
&\sum\limits_{m=3}^{\infty} \CCC_{m,2}\leq  C |u|_{2,\infty} |u|_2 +
C \tau |u|_{2,\infty} |u|_3+C \tau^2 |u|_{2,\infty}
\Vert{u}\Vert_{Y_\tau} \label{eq:low2}, \end{align} for intermediate
$j$, we have
\begin{align}
&\sum\limits_{m=6}^{\infty} \sum\limits_{j=3}^{[m/2]} \CCC_{m,j}
\leq C \tau^{3/2} \Vert{u}\Vert_{X_\tau} \Vert{u}\Vert_{Y_\tau}\label{eq:lowmed},\\
&\sum\limits_{m=7}^{\infty} \sum\limits_{j=[m/2]+1}^{m-3} \CCC_{m,j}
\leq C \tau^{3/2} \Vert{u}\Vert_{X_\tau}
\Vert{u}\Vert_{Y_\tau}\label{eq:highmed}, \end{align} and for high
$j$,
\begin{align}
&\sum\limits_{m=5}^{\infty} \CCC_{m,m-2} \leq C \tau^3 |u|_{3,\infty} \Vert{u}\Vert_{Y_\tau}\label{eq:highm-2},\\
&\sum\limits_{m=4}^{\infty} \CCC_{m,m-1} \leq C \tau |u|_{2,\infty}
|u|_3+C \tau^2 |u|_{2,\infty} \Vert{u}\Vert_{Y_\tau}\label{eq:highm-1},\\
&\sum\limits_{m=3}^{\infty} \CCC_{m,m} \leq  C |u|_{1,\infty} |u|_3
+ C \tau |u|_{1,\infty} \Vert{u}\Vert_{Y_\tau} \label{eq:highm}.
\end{align}
Due to symmetry we shall only prove
\eqref{eq:low1}--\eqref{eq:lowmed} and indicate the necessary
modifications for \eqref{eq:highmed}--\eqref{eq:highm}.

Proof of \eqref{eq:low1}: The H\"older inequality, \eqref{eq:Cmj},
and Lemma~\ref{lemma:choose} imply that
\begin{align} \label{eq:Cmjlow1-1}
\sum\limits_{m=3}^{\infty} \CCC_{m,1} &=
\sum\limits_{|\alpha|=3}\sum\limits_{|\beta|=1, \beta\leq \alpha}
 \left(M_\beta \Vert{\partial^\beta u}\Vert_{L^\infty} \right)
  \left( M_{\alpha - \beta} \Vert{\partial^{\alpha-\beta} \nabla u}\Vert_{L^2}\right) M_\alpha M_{\beta}^{-1} M_{\alpha-\beta}^{-1} {\alpha \choose \beta} \notag\\
& +\sum\limits_{m=4}^{\infty} \sum\limits_{|\alpha|=m}
  \sum\limits_{|\beta|=1, \beta\leq \alpha} \left( M_\beta \Vert{\partial^\beta
  u}\Vert_{L^\infty}\right) \left( M_{\alpha-\beta} \Vert{\partial^{\alpha-\beta} \nabla u}\Vert_{L^2} \frac{(m-3) \tau^{m-4}}{(m-3)!^s}\right) \notag \\
& \qquad \qquad \times M_\alpha M_{\beta}^{-1} M_{\alpha-\beta}^{-1}
{\alpha \choose \beta} \frac{1}{m-3}\ \tau \notag \\
& \leq C \sum\limits_{|\alpha|=3}\sum\limits_{|\beta|=1, \beta\leq
\alpha} \left(M_\beta \Vert{\partial^\beta u}\Vert_{L^\infty}
\right)
  \left( M_{\alpha - \beta} \Vert{\partial^{\alpha-\beta} \nabla u}\Vert_{L^2}\right) \notag\\
& + C \tau \sum\limits_{m=4}^{\infty} \sum\limits_{|\alpha|=m}
  \sum\limits_{|\beta|=1, \beta\leq \alpha} \left( M_\beta \Vert{\partial^\beta
  u}\Vert_{L^\infty}\right) \notag\\
& \qquad \qquad \times \left( M_{\alpha-\beta} \Vert{\partial^{\alpha-\beta} \nabla u}\Vert_{L^2} \frac{(m-3)
  \tau^{m-4}}{(m-3)!^s}\right) \frac{m}{m-3}.
\end{align}
The first sum on the far right side of \eqref{eq:Cmjlow1-1} can be
estimated by
\begin{align*}
  C |u|_{1,\infty} |\nabla u|_2
  \leq C  |u|_{1,\infty} |u|_3.
\end{align*}
Since $m \geq 4$, Lemma~\ref{lemma:product} implies that the second
term on the far right side of \eqref{eq:Cmjlow1-1} is bounded by
\begin{align*}
 C \tau |u|_{1,\infty} \sum\limits_{m=4}^{\infty} |\nabla u|_{m-1} \frac{(m-3)
  \tau^{m-4}}{(m-3)!^s} \leq C \tau |u|_{1,\infty}
  \Vert{u}\Vert_{Y_\tau},
\end{align*}
  concluding the proof of \eqref{eq:low1}.

Proof of \eqref{eq:low2}: As in the proof of \eqref{eq:low1}
above, we have
\begin{align}
  \sum\limits_{m=3}^{\infty} \CCC_{m,2} & \leq
  C \sum\limits_{|\alpha|=3,4}
  \sum\limits_{|\beta|=2,\beta\leq\alpha} \tau^{m-3}
  \left( M_\beta \Vert{\partial^\beta u}\Vert_{L^\infty}\right) \left(
  M_{\alpha-\beta} \Vert{\partial^{\alpha-\beta} \nabla
  u}\Vert_{L^2}\right)\notag\\
  & \qquad \qquad \times M_\alpha M_{\beta}^{-1} M_{\alpha-\beta}^{-1} {\alpha \choose \beta} \notag\\
  & + C\sum\limits_{m=5}^{\infty} \sum\limits_{|\alpha|=m}
  \sum\limits_{|\beta|=2, \beta\leq \alpha}
  \left( M_\beta \Vert{\partial^\beta u}\Vert_{L^\infty} \right) \left(M_{\alpha-\beta} \Vert{\partial^{\alpha-\beta} \nabla u}\Vert_{L^2} \frac{(m-4) \tau^{m-5}}{(m-4)!^s}
  \right) \notag\\
  & \qquad \qquad \times  M_\alpha M_{\beta}^{-1} M_{\alpha-\beta}^{-1} {\alpha \choose \beta} \frac{1}{(m-4)
  (m-3)^s} \tau^2. \label{eq:low2-1}
\end{align}
Using Lemma~\ref{lemma:product}, the first sum on the right of
\eqref{eq:low2-1} can be estimated from above by
\begin{align*}
C |u|_{2,\infty} |\nabla u|_1  + C \tau |u|_{2,\infty} |\nabla u|_2
\leq C |u|_{2,\infty} |u|_2 + C \tau |u|_{2,\infty} |u|_3.
\end{align*}
On the other hand, since $s\geq 1$, $|\beta|=2$, and $|\alpha|=m
\geq 5$, we have by Lemma~\ref{lemma:choose} that
\begin{align*}
M_\alpha M_{\beta}^{-1} M_{\alpha-\beta}^{-1} {\alpha \choose \beta}
\frac{1}{(m-4)
  (m-3)^s} \leq {m\choose 2} \frac{1}{(m-4)(m-3)} \leq C.
\end{align*}
By Lemma~\ref{lemma:product}, the second sum on the right of
\eqref{eq:low2-1} is thus bounded by
\begin{align*}
C \tau^2 \sum\limits_{m=5}^{\infty} |u|_{2,\infty} |\nabla u|_{m-2}
\frac{(m-4) \tau^{m-5}}{(m-4)!^s} \leq C \tau^2 |u|_{2,\infty}
\Vert{u}\Vert_{Y_\tau}.
\end{align*}
This proves the desired estimate.

Proof of \eqref{eq:lowmed}: We first observe that the H\"older
inequality and the Sobolev inequality give
\begin{align*}
\Vert{\partial^\beta u \cdot \nabla \partial^{\alpha-\beta}
  u}\Vert_{L^2} \leq C \Vert{\partial^\beta
  u}\Vert_{L^2}^{1/4} \Vert{\Delta \partial^\beta u}\Vert_{L^2}^{3/4} \Vert{\nabla \partial^{\alpha-\beta}
  u}\Vert_{L^2}.
\end{align*}
Therefore we can bound the right hand side of \eqref{eq:lowmed} as
follows
\begin{align*}
& \sum\limits_{m=6}^{\infty} \sum\limits_{j=3}^{[m/2]} \CCC_{m,j}
\leq \sum\limits_{m=6}^{\infty} \sum\limits_{j=3}^{[m/2]}
\sum\limits_{|\alpha|=m}
\sum\limits_{|\beta|=j, \beta\leq \alpha}  \left( M_\beta \Vert{\partial^\beta u}\Vert_{L^2} \frac{\tau^{j-3}}{(j-3)!^s}\right)^{1/4} \tau^{3/2} {\mathcal
A}_{\alpha,\beta,s}  \notag\\
&\qquad \qquad \times \left( M_\beta \Vert{\partial^\beta \Delta u}\Vert_{L^2} \frac{\tau^{j-1}}{(j-1)!^s}\right)^{3/4}  \left( M_{\alpha-\beta}
\Vert{\partial^{\alpha-\beta} \nabla u}\Vert_{L^2} \frac{(m-j-2)
\tau^{m-j-3}}{(m-j-2)!^s}\right)  ,
\end{align*}
where
\begin{align*}
{\mathcal A}_{\alpha,\beta,s} = M_\alpha M_{\beta}^{-1}
M_{\alpha-\beta}^{-1} {\alpha \choose \beta} \frac{(j-3)!^{s/4}
(j-1)!^{3s/4} (m-j-2)!^s}{(m-3)!^s
  (m-j-2)}.
\end{align*} By Lemma~\ref{lemma:choose}, we have that for
$ m \geq 6$ and  $3\leq j \leq [ m/2]$
\begin{align*}
 {\mathcal
A}_{\alpha,\beta,s} & \leq C {m \choose j} {m-3 \choose j-1}^{-s}
\frac{1}{(m-j-2)(j-1)^{s/4}
  (j-2)^{s/4}} \notag\\
  & \leq C {m-3 \choose j-1}^{-s+1} \frac{1}{j^{1 + s/2}}.
\end{align*}
Since $s\geq 1$ the above chain of inequalities gives that ${\mathcal A}_{\alpha,\beta,s}
\leq C$. Together with Lemma~\ref{lemma:product} and the discrete
H\"older inequality this shows that
\begin{align*}
\sum\limits_{m=6}^{\infty} \sum\limits_{j=3}^{[m/2]} \CCC_{m,j} &
\leq C \tau^{3/2}  \sum\limits_{m=6}^{\infty}
\sum\limits_{j=3}^{[m/2]} \left( |u|_{j} \frac{\tau^{j-3}}{(j-3)!^s}
\right)^{1/4} \left( |\Delta u|_{j} \frac{\tau^{j-1}}{(j-1)!^s}
\right)^{3/4} \notag \\
& \qquad \times \left( |\nabla u|_{m-j} \frac{(m-j-2)
\tau^{m-j-3}}{(m-j-2)!^s}\right).
\end{align*}
The discrete Young and H\"older inequalities then give
\begin{align*}
\sum\limits_{m=6}^{\infty} \sum\limits_{j=3}^{[m/2]} \CCC_{m,j} &
\leq C \tau^{3/2} \Vert{u}\Vert_{X_\tau} \Vert{u}\Vert_{Y_\tau},
\end{align*}
concluding the proof of \eqref{eq:lowmed}.

To prove \eqref{eq:highmed}--\eqref{eq:highm} we proceed as in the
proofs of \eqref{eq:low1}--\eqref{eq:lowmed} above, with the roles
of $j$ and $m-j$ reversed. Instead of estimating
$\Vert{\partial^\beta u \cdot \nabla
\partial^{\alpha-\beta} u}\Vert_{L^2}$ with
$\Vert{\partial^\beta u}\Vert_{L^\infty}
\Vert{\partial^{\alpha-\beta} \nabla u}\Vert_{L^2}$ we instead bound
\begin{align*}
\Vert{\partial^\beta u \cdot \nabla
\partial^{\alpha-\beta} u}\Vert_{L^2} \leq \Vert{\partial^\beta u}\Vert_{L^2}
\Vert{\partial^{\alpha-\beta} \nabla u}\Vert_{L^\infty}.
\end{align*}
We omit further details. This concludes the proof of
Lemma~\ref{lemma:commutator}.
\end{proof}

\section{The pressure estimate}\label{sec:pressure}

In the proof of the Lemma~\ref{lemma:pressure} we need to use the
following higher regularity estimate on the solution of the Neumann
problem associated to the Poisson equation for the half-space.
\begin{lemma} \label{lemma:neumann}
Assume that $p$ is a smooth solution of the Neumann problem
\begin{align*}
  - \Delta p &= v\ \mbox{in}\ \Omega,\\
  \frac{\partial p}{\partial n} &= 0\ \mbox{on}\ \partial \Omega ,
\end{align*}
with $v \in C^\infty(\Omega)$. Then there is a universal constant
$C>0$ such that
\begin{align}
  \Vert{\partial_3 \partial^\alpha p}\Vert_{L^2} &\leq C
  \sum\limits_{\substack{s,t \in {\mathbb N}_0,|\beta| = m-1\\ \beta' - \alpha' = (2s,2t)}} {s+t \choose s} \Vert{\partial^\beta
  v}\Vert_{L^2},
  \label{eq:neumannlemma3}
\end{align}
for any $m\geq 1$ and any multiindex $\alpha\in{\mathbb N}_{0}^{3}$
with $|\alpha|=m$ and $\alpha_3 \neq 0$. Additionally, if $\alpha_3
\geq 2$ then
\begin{align}
  \Vert{\partial_1 \partial^\alpha p}\Vert_{L^2} &\leq C
  \sum\limits_{\substack{ s,t \in {\mathbb N}_0,|\beta| = m-1\\ \beta' - \alpha' = (2s+1,2t)}} {s+t \choose s} \Vert{\partial^\beta
  v}\Vert_{L^2},
  \label{eq:neumannlemma1}\\
    \Vert{\partial_2 \partial^\alpha p}\Vert_{L^2} &\leq C
  \sum\limits_{\substack{ s,t \in {\mathbb N}_0, |\beta| = m-1\\ \beta' - \alpha' = (2s,2t+1)}} {s+t \choose s} \Vert{\partial^\beta v}\Vert_{L^2},
  \label{eq:neumannlemma2}
\end{align} where $C>0$ is a universal constant.
\end{lemma}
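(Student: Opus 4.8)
The plan is to perform a partial Fourier transform in the tangential variables, which turns the Neumann problem into an explicitly solvable family of one–dimensional problems, and then to read off the claimed derivative structure directly from the explicit Green's function.

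Write $\hat p(\xi',x_3)$ and $\hat v(\xi',x_3)$ for the Fourier transforms in $x'=(x_1,x_2)$, set $\rho=|\xi'|\ge 0$, and (abusing notation) write $x,y$ for the normal variables $x_3,y_3$. For each fixed $\xi'$, the problem reduces to $-\partial_x^2\hat p+\rho^2\hat p=\hat v$ on $(0,\infty)$ with $\partial_x\hat p(\xi',0)=0$ and decay at infinity, so that $\hat p(\xi',x)=\int_0^\infty G(x,y;\rho)\hat v(\xi',y)\,dy$ with the explicit Neumann Green's function $G(x,y;\rho)=\frac{1}{2\rho}\bigl(e^{-\rho|x-y|}+e^{-\rho(x+y)}\bigr)$. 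For smooth, sufficiently decaying $p$ — which is all that is needed in view of the approximation scheme used for the rest of the paper — this representation and the manipulations below are justified. Hence
\[
  \widehat{\partial_3\partial^\alpha p}=(i\xi_1)^{\alpha_1}(i\xi_2)^{\alpha_2}\int_0^\infty \partial_x^{\alpha_3+1}G(x,y;\rho)\,\hat v(\xi',y)\,dy ,
\]
and analogously with an extra factor $i\xi_1$ or $i\xi_2$ (and $\partial_x^{\alpha_3}G$ in place of $\partial_x^{\alpha_3+1}G$) for $\partial_1\partial^\alpha p$ or $\partial_2\partial^\alpha p$.

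The algebraic point is that $G$ satisfies $\partial_x^2 G=-\delta(x-y)+\rho^2 G$ as a distribution in $x$. Iterating this relation (``polynomial division by $-\partial_x^2+\rho^2$'') gives, with $\epsilon:=(\alpha_3+1)\bmod 2$,
\[
  \partial_x^{\alpha_3+1}G(x,y;\rho)=\rho^{\alpha_3+1-\epsilon}\,\partial_x^{\epsilon}G(x,y;\rho)-\sum_i \rho^{2i}\,\partial_x^{\alpha_3-1-2i}\delta(x-y),
\]
the sum running over $0\le i\le(\alpha_3-1)/2$ if $\epsilon=0$ and over $0\le i\le(\alpha_3-2)/2$ if $\epsilon=1$ (so the order of $\partial_x\delta$ is even, respectively odd). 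Paired against $\hat v$, the term $\rho^{2i}\partial_x^{\alpha_3-1-2i}\delta(x-y)$ produces $\rho^{2i}\partial_x^{\alpha_3-1-2i}\hat v(\xi',x)$; expanding $\rho^{2i}=(\xi_1^2+\xi_2^2)^i=\sum_{l=0}^i\binom{i}{l}\xi_1^{2l}\xi_2^{2(i-l)}$, taking $L^2$ in $(x,\xi')$ and using Plancherel in $x'$, this contributes exactly $\sum_l\binom{i}{l}\Vert\partial^\beta v\Vert_{L^2}$ with $\beta=(\alpha_1+2l,\alpha_2+2(i-l),\alpha_3-1-2i)$, i.e.\ $\beta'-\alpha'=(2s,2t)$ with $s=l$, $t=i-l$ and weight $\binom{s+t}{s}$ — precisely the terms on the right-hand side of \eqref{eq:neumannlemma3}. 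For \eqref{eq:neumannlemma1}--\eqref{eq:neumannlemma2} the extra $i\xi_1$ (resp.\ $i\xi_2$) shifts the first (resp.\ second) entry of $\beta-\alpha$ by one to an odd value, and the reduction is applied to $\partial_x^{\alpha_3}G$, which is why the hypothesis $\alpha_3\ge 2$ appears there.

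It remains to bound the single leftover term $(i\xi_1)^{\alpha_1}(i\xi_2)^{\alpha_2}\,\rho^{\alpha_3+1-\epsilon}\partial_x^{\epsilon}\hat p$. If $\epsilon=0$ (so $\alpha_3$ odd), $\partial_x^{\epsilon}\hat p=\hat p=\int_0^\infty G\hat v\,dy$, and since $\sup_x\int_0^\infty G(x,y;\rho)\,dy\le\frac{3}{2}\rho^{-2}$ (symmetrically in $x$, $G$ being symmetric), Schur's test gives $\Vert\hat p(\xi',\cdot)\Vert_{L^2_x}\le\frac{3}{2}\rho^{-2}\Vert\hat v(\xi',\cdot)\Vert_{L^2_x}$; the leftover is then $\le\frac{3}{2}|\xi_1|^{\alpha_1}|\xi_2|^{\alpha_2}\rho^{\alpha_3-1}\Vert\hat v(\xi',\cdot)\Vert_{L^2}$, and since $\alpha_3-1$ is even, expanding $\rho^{\alpha_3-1}=(\xi_1^2+\xi_2^2)^{(\alpha_3-1)/2}$ and applying Plancherel puts this into the claimed form (with $\beta_3=0$). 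If $\epsilon=1$ (so $\alpha_3$ even) I would use the boundary condition $\partial_3 p=0$ on $\partial\Omega$: it says $\partial_x\hat p=\widehat{\partial_3 p}$ vanishes at $x=0$, so $\partial_x\hat p$ is the decaying solution of $-\partial_x^2(\partial_x\hat p)+\rho^2(\partial_x\hat p)=\partial_x\hat v$ with a \emph{Dirichlet} condition, whence $\partial_x\hat p=\int_0^\infty G^D(x,y;\rho)\,\partial_y\hat v(\xi',y)\,dy$ with $G^D=\frac{1}{2\rho}\bigl(e^{-\rho|x-y|}-e^{-\rho(x+y)}\bigr)\ge 0$ and $\sup_x\int_0^\infty G^D(x,y;\rho)\,dy\le\rho^{-2}$; hence $\Vert\partial_x\hat p(\xi',\cdot)\Vert_{L^2_x}\le\rho^{-2}\Vert\widehat{\partial_3 v}(\xi',\cdot)\Vert_{L^2_x}$, so the leftover is $\le|\xi_1|^{\alpha_1}|\xi_2|^{\alpha_2}\rho^{\alpha_3-2}\Vert\widehat{\partial_3 v}(\xi',\cdot)\Vert_{L^2}$, a nonnegative even power of $\rho$ because $\alpha_3\ge 2$, which after expanding yields the claimed terms (with $\beta_3=1$). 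Summing the finitely many pieces with the triangle inequality, integrating in $\xi'$, and using Plancherel in $x'$ once more gives \eqref{eq:neumannlemma3}; the estimates \eqref{eq:neumannlemma1} and \eqref{eq:neumannlemma2} are obtained the same way, carrying the extra horizontal derivative through. I expect the only genuinely delicate point to be the parity bookkeeping in the leftover term: it is precisely the fact that $\partial_3 p$ satisfies a Dirichlet rather than a Neumann condition that forces the surviving power of $\rho$ to be a nonnegative even integer, hence expandable through $(\xi_1^2+\xi_2^2)^k$; the rest is the elementary Schur bounds on the explicit kernels together with routine combinatorial bookkeeping to match the weights $\binom{s+t}{s}$.
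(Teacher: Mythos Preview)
Your argument is correct and, at the structural level, coincides with the paper's proof: both iterate the relation $\partial_3^2 p = -\Delta' p - v$ (your $\partial_x^2 G = \rho^2 G - \delta$ is precisely its tangential Fourier transform) to write $\partial_3^{\alpha_3+1}\partial^{\alpha'}p$ as a ``leftover'' term of the form $(-\Delta')^{k+1}\partial^{\alpha'}p$ (your $\rho^{\alpha_3+1-\epsilon}\partial_x^{\epsilon}\hat p$) plus a finite sum of derivatives of $v$, and both extract the weights $\binom{s+t}{s}$ by binomially expanding $(-\Delta')^i$, i.e.\ $(\xi_1^2+\xi_2^2)^i$. The only methodological difference is in how the leftover is controlled: the paper invokes the classical $H^2$-regularity estimate for the Neumann problem satisfied by $g=(-\Delta')^k\partial^{\alpha'}p$ (and, implicitly, the Dirichlet problem for $\partial_3 g$ in the even-$\alpha_3$ case), whereas you make this completely explicit via the half-line Green's functions $G$ and $G^D$ and Schur's test. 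Your Dirichlet trick for $\epsilon=1$ is exactly what underlies the paper's bound $\Vert\partial_3\Delta' g\Vert_{L^2}\le C\Vert\partial_3(-\Delta')^k\partial^{\alpha'}v\Vert_{L^2}$, which the paper states without singling out the change of boundary condition. The Fourier route buys you explicit kernels and sharper constants; the paper's physical-space route avoids the Green's function bookkeeping and would transfer more readily to domains other than the half-space.
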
We emphasize that the constant $C$ in the above lemma is independent of $\alpha$ and
$m$. In \eqref{eq:neumannlemma3} we have are summing over the set
\begin{align*}
  \{ \beta \in {\mathbb N}_{0}^3 : |\beta|=m-1,\ \exists s,t \in
  {\mathbb N}_{0}\ \mbox{such that}\ \beta' - \alpha' = (2s,2t)\}
\end{align*}
and similar conventions are used in \eqref{eq:neumannlemma1},
\eqref{eq:neumannlemma2}, and throughout this section.
\begin{proof}
In order to avoid repetition, we only prove \eqref{eq:neumannlemma3}
and indicate the necessary changes for
\eqref{eq:neumannlemma1} and \eqref{eq:neumannlemma2}. Let $\Delta' =
\partial_{11} +
\partial_{22}$ be the tangential Laplacian. Using induction on $k
\in {\mathbb N}_{0}$ we obtain the identity
\begin{align*}
  \partial_{3}^{2k+2} p = (-\Delta')^{k+1}p - \sum\limits_{j=0}^{k} \partial_{3}^{2j}
  (-\Delta')^{k-j} v,
\end{align*}
and upon applying $\partial_3$ to the above equation
\begin{align*}
  \partial_{3}^{2k+3} p = \partial_3(-\Delta')^{k+1}p - \sum\limits_{j=0}^{k} \partial_{3}^{2j+1}
  (-\Delta')^{k-j} v.
\end{align*}
Therefore given $|\alpha|=m$, with $\alpha_3 =2k+1\geq 1$, we have
\begin{align}
  \partial_{3} \partial^\alpha p = \partial_{3}^{2k+2} \partial^{\alpha'} p = (-\Delta')^{k+1}\partial^{\alpha'} p + \sum\limits_{j=0}^{k} (-1)^{k-j+1} \partial_{3}^{2j}
  (\partial_{11} + \partial_{22})^{k-j} \partial^{\alpha'} v,
  \label{eq:p1}
\end{align}
and if $\alpha_3 = 2k+2 \geq 2$, we have
\begin{align}
  \partial_{3} \partial^\alpha p =\partial_{3}^{2k+3} \partial^{\alpha'} p = \partial_3(-\Delta')^{k+1}\partial^{\alpha'} p + \sum\limits_{j=0}^{k} (-1)^{k-j+1} \partial_{3}^{2j+1}
  (\partial_{11} + \partial_{22})^{k-j} \partial^{\alpha'} v.
  \label{eq:p2}
\end{align}
Since $n=(0,0,-1)$, the function $g = (-\Delta')^k
\partial^{\alpha'}p$ satisfies the Neumann problem
\begin{align*}
  -\Delta g = (-\Delta')^k \partial^{\alpha'} v\qquad  \mbox{in}\ \Omega,\\
  \frac{\partial g}{\partial n} = 0\qquad \mbox{on} \ \partial \Omega.
\end{align*}
Using the classical $H^2$-regularity argument for the Neumann
problem we then have
\begin{align*}
  \Vert{\Delta' g}\Vert_{L^2} \leq C
  \Vert{(-\Delta')^k \partial^{\alpha'} v}\Vert_{L^2},
\end{align*}
and
\begin{align*}
  \Vert{\partial_3 \Delta' g}\Vert_{L^2} \leq C
  \Vert{\partial_3 (-\Delta')^k \partial^{\alpha'}
  v}\Vert_{L^2},
\end{align*}
for a positive universal constant $C$. Combining the above estimates
with \eqref{eq:p1}, \eqref{eq:p2}, and the identity
\begin{align*}
  (\partial_{11} + \partial_{22})^m w = \sum\limits_{s=0}^{m} {m
  \choose s} \partial_{1}^{2s} \partial_{2}^{2m-2s} w,
\end{align*}
we obtain
\begin{align}
  \Vert{\partial_3 \partial^\alpha p}\Vert_{L^2} & \leq C
  \sum_{j=0}^{k} \Vert{\partial_{3}^{2j} (\partial_{11} + \partial_{22})^{k-j} \partial^{\alpha'}
  v}\Vert_{L^2} \notag\\
  &\leq C \sum\limits_{j=0}^{k} \sum\limits_{s=0}^{k-j} {k-j \choose s}
  \Vert{\partial_{1}^{2s + \alpha_1} \partial_{2}^{2k-2j-2s+\alpha_2} \partial_{3}^{2j}
  v}\Vert_{L^2} \label{eq:p3}
\end{align}
if $\alpha_3 = 2k+1\geq 1$, and
\begin{align}
  \Vert{\partial_3 \partial^\alpha p}\Vert_{L^2} & \leq C
  \sum_{j=0}^{k} \Vert{\partial_{3}^{2j+1} (\partial_{11} + \partial_{22})^{k-j} \partial^{\alpha'}
  v}\Vert_{L^2} \notag\\
  &\leq C
  \sum\limits_{j=0}^{k} \sum\limits_{s=0}^{k-j} {k-j \choose s}
  \Vert{\partial_{1}^{2s + \alpha_1} \partial_{2}^{2k-2j-2s+\alpha_2} \partial_{3}^{2j+1}
  v}\Vert_{L^2} \label{eq:p44}
\end{align}
if $\alpha_3 = 2k+2\geq 2$. To simplify \eqref{eq:p3} above, let $t
= k-j-s \geq 0$ and $\beta = (2s + \alpha_1,2k-2j-2s+\alpha_2,2j) =
(2s + \alpha_1, 2t + \alpha_2, \alpha_3 - 1 - 2s -2t) \in {\mathbb
N}_{0}^{3}$. Since $|\alpha|=m$ and $\alpha_3 = 2k+1$, we have
$|\beta|=m-1$, and by re-indexing the sums, \eqref{eq:p3} can be
re-written as
\begin{align*}
  \Vert{\partial_3 \partial^\alpha p}\Vert_{L^2} \leq C
  \sum\limits_{\substack{s,t\in{\mathbb N}_0, |\beta|=m-1\\ \beta'-\alpha'=(2s,2t)}} \ {s+t \choose s}
  \Vert{\partial^\beta v}\Vert_{L^2},
\end{align*}
The above estimate also holds for $\alpha_3 = 2k+2$ with the
substitution $\beta = (2s + \alpha_1,2k-2j-2s+\alpha_2,2j+1)$,
thereby simplifying the upper bound \eqref{eq:p44}, and concluding the proof of \eqref{eq:neumannlemma3}.

To prove \eqref{eq:neumannlemma1} we proceed as above and obtain
\begin{align}
\Vert{\partial_1 \partial^\alpha p}\Vert_{L^2}  & =
\Vert{\partial_{1}^{\alpha_1 +1}
\partial_{2}^{\alpha_2} \partial_{3}^{2k+2} p }\Vert_{L^2} \notag\\
& \qquad \qquad \leq C
\sum\limits_{j=0}^{k} \sum\limits_{s=0}^{k-j} {k-j \choose s}
\Vert{\partial_{1}^{\alpha_1 + 2s +1} \partial_{2}^{\alpha_2 + 2k -
2j - 2s} \partial_{3}^{2j} v}\Vert_{L^2} \label{eq:D1even}
\end{align}
if $\alpha_3 = 2k+2 \geq 2$, and
\begin{align}
\Vert{\partial_1 \partial^\alpha p}\Vert_{L^2}  &=
\Vert{\partial_{1}^{\alpha_1 +1}
\partial_{2}^{\alpha_2} \partial_{3}^{2k+3} p }\Vert_{L^2} \notag\\
& \qquad \qquad \leq C
\sum\limits_{j=0}^{k} \sum\limits_{s=0}^{k-j} {k-j \choose s}
\Vert{\partial_{1}^{\alpha_1 + 2s +1} \partial_{2}^{\alpha_2 + 2k -
2j - 2s} \partial_{3}^{2j+1} v}\Vert_{L^2} \label{eq:D1odd}
\end{align}
if $\alpha_3 = 2k+3 \geq 3$. In \eqref{eq:D1even} we let $t = k-j-s
\geq 0$ and $\beta=(\alpha_1 + 2s + 1, \alpha_2 + 2t,2j) = (\alpha_1
+ 2s + 1, \alpha_2 + 2t,\alpha_3 - 2 - 2s - 2t)$, since $\alpha_3 =
2k +2$ and $|\alpha|=m$. Similarly in \eqref{eq:D1odd} we let $\beta
= (\alpha_1 + 2s + 1, \alpha_2 + 2t,2j+1) = (\alpha_1 + 2s + 1,
\alpha_2 + 2t,\alpha_3 - 2 - 2s - 2t)$, since $\alpha_3 = 2k +3$ and
$|\alpha|=m$. The above substitutions and re-indexing prove
\eqref{eq:neumannlemma1}. Upon permuting the first and second
coordinates, this also proves \eqref{eq:neumannlemma2}.
\end{proof}
\begin{remark}\label{rem:pres}
We note that Lemma~\ref{lemma:neumann} does not give an estimate for
$\Vert{\partial_1 \partial^\alpha p}\Vert_{L^2}$ and
$\Vert{\partial_2 \partial^\alpha p}\Vert_{L^2}$ if $\alpha_3=1$. In
this case we note that the function $g = \partial^{\alpha'} p$
satisfies the Neumann problem
\begin{align*}
  -\Delta g = \partial^{\alpha'} v\qquad  \mbox{in}\ \Omega,\\
  \frac{\partial g}{\partial n} = 0\qquad \mbox{on} \ \partial \Omega.
\end{align*} The classical $H^2$-regularity argument then gives
\begin{align*}
\Vert{\partial_1 \partial^\alpha p}\Vert_{L^2} = \Vert{\partial_1
\partial_3
\partial^{\alpha'} p}\Vert_{L^2}\leq C \Vert{\partial^{\alpha'}
v}\Vert_{L^2},
\end{align*}
and
\begin{align*}
\Vert{\partial_2 \partial^\alpha p}\Vert_{L^2} = \Vert{\partial_2
\partial_3
\partial^{\alpha'} p}\Vert_{L^2}\leq C \Vert{\partial^{\alpha'}
v}\Vert_{L^2},
\end{align*}
for a positive universal constant $C>0$.
\end{remark}
We note that Lemma~\ref{lemma:neumann} is different from the
classical higher regularity estimates (cf.~\cite{GT,LM,T}) for the
Neumann problem in the fact that the constant $C$ in
\eqref{eq:neumannlemma3}--\eqref{eq:neumannlemma2} does not increase
with $m$. The dependence on $m$ is encoded in the sums with binomial
weights on the right side of
\eqref{eq:neumannlemma3}--\eqref{eq:neumannlemma2}.

The following lemma shows that only a factor of $m$ is lost in the
above higher regularity estimates if each $\Vert{\partial^\beta
v}\Vert_{L^2}$ term is paired with a proper binomial weight. This
explains the definition of the homogeneous Sobolev norms $|\cdot|_m$
in \eqref{eq:m}.
\begin{lemma}\label{lemma:*}
  There exists a positive universal constant $C$ such that
  \begin{align}
    \sum\limits_{s=0}^{[\beta_1/2]} \sum\limits_{t=0}^{[\beta_2/2]} {
    \beta_1 + \beta_2 - 2s - 2t \choose \beta_1 - 2s} {s+t \choose
    s} &\leq C m {\beta_1 + \beta_2 \choose \beta_1} \label{*}
  \end{align}
for any $m\geq 3$ and any multi-index $\beta =
(\beta_1,\beta_2,m-1-\beta_1-\beta_2) \in
  {\mathbb N}_{0}^{3}$. Additionally, if $\beta_1 \geq 1$ we have
  \begin{align}
    \sum\limits_{s=0}^{[(\beta_1-1)/2]} \sum\limits_{t=0}^{[\beta_2/2]} {
    \beta_1 + \beta_2 - 2s -1 - 2t \choose \beta_1 - 2s -1} {s+t \choose
    s} &\leq C m {\beta_1 + \beta_2 \choose \beta_1}, \label{*1}
  \end{align}
  while if $\beta_2 \geq 1$ we have
  \begin{align}
    \sum\limits_{s=0}^{[\beta_1/2]} \sum\limits_{t=0}^{[(\beta_2-1)/2]} {
    \beta_1 + \beta_2 - 2s - 2t-1 \choose \beta_1 - 2s} {s+t \choose
    s} &\leq C m {\beta_1 + \beta_2 \choose \beta_1}, \label{*2}
  \end{align}
  where $C$ is a universal constant.
\end{lemma}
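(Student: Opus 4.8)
The plan is to prove all three inequalities \eqref{*}, \eqref{*1}, \eqref{*2} by the same elementary generating function argument, since they differ only by shifts in the parity indices. The key combinatorial fact is the Vandermonde-type identity
\begin{align*}
\sum\limits_{a+b = N} {a \choose i}{b \choose j} = {N+1 \choose i+j+1},
\end{align*}
valid for $N\geq i+j$, which one obtains by comparing the coefficient of $x^{i+j}$ in $\left(\sum_a {a \choose i} x^{a-i}\right)\left(\sum_b {b \choose j} x^{b-j}\right) = (1-x)^{-i-1}(1-x)^{-j-1}$ with that in $(1-x)^{-i-j-2}$. The strategy is to first discard the factor ${s+t \choose s}$ by bounding it crudely, then recognize the remaining double sum over $s,t$ as (a piece of) the left side of such an identity, and finally check that the resulting binomial is at most $Cm {\beta_1+\beta_2 \choose \beta_1}$.

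More precisely, for \eqref{*} I would write $\beta_1 = 2p + \epsilon_1$, $\beta_2 = 2q + \epsilon_2$ with $\epsilon_i \in \{0,1\}$, so that the outer sums run $0\leq s \leq p$ and $0 \leq t \leq q$. Using ${s+t \choose s} \leq {p+q \choose s}\big/\text{(nothing useful)}$ is too lossy; instead the clean route is to first sum in $s$ and $t$ against the inner binomial using the identity above applied twice. Concretely, I would fix the total $u = s+t$, note ${s+t\choose s} = {u \choose s}$, and re-sum: $\sum_{s+t=u} {u \choose s} {\beta_1+\beta_2 - 2u \choose \beta_1 - 2s}$. Shifting to a variable $a = \beta_1 - 2s$ this is comparable to a single Vandermonde convolution and is bounded by ${\beta_1+\beta_2 - u \choose \beta_1 - u}$ up to a universal constant (one has to be slightly careful because only even values of $\beta_1 - 2s$ appear, but passing from a sum over even indices to a sum over all indices only costs a constant factor, as each term is dominated by a full-range convolution term). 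Then I sum over $u$ from $0$ to $p+q \leq m/2$: $\sum_{u=0}^{p+q} {\beta_1 + \beta_2 - u \choose \beta_1 - u} \leq (p+q+1) {\beta_1+\beta_2 \choose \beta_1} \leq C m {\beta_1+\beta_2\choose \beta_1}$, since each summand is at most the $u=0$ term ${\beta_1+\beta_2\choose \beta_1}$ (decreasing in $u$) and there are at most $m$ of them. This yields \eqref{*}.

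For \eqref{*1} and \eqref{*2} the only change is that the inner binomial is ${\beta_1+\beta_2 - 2s - 2t - 1 \choose \beta_1 - 2s - 1}$ respectively ${\beta_1 + \beta_2 - 2s - 2t - 1 \choose \beta_1 - 2s}$; the ranges of $s$ shrink to $[(\beta_1-1)/2]$ in one case and of $t$ to $[(\beta_2-1)/2]$ in the other, which only helps. Running the identical two-step argument — re-sum in $s+t=u$ to get a single binomial ${\beta_1 + \beta_2 - u - 1 \choose \cdot}$, then sum the at most $m/2$ values of $u$ against the monotone envelope — gives the bound $C m {\beta_1 + \beta_2 \choose \beta_1}$ in each case, after noting ${\beta_1 + \beta_2 - 1 \choose \beta_1 - 1} \leq {\beta_1+\beta_2 \choose \beta_1}$ and similarly for the other shifted binomial. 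I expect the main obstacle to be the bookkeeping in the inner re-summation: one must verify that restricting a Vandermonde convolution to every other index (because of the $2s$, $2t$ structure) genuinely loses only a universal constant and not a factor growing with $m$, and that the off-by-one shifts in \eqref{*1}--\eqref{*2} do not break the monotonicity of the $u$-envelope. Both points are routine but require care to state cleanly; everything else is the two applications of the binomial-coefficient convolution identity together with the trivial observation that a decreasing sequence of length $\leq m$ has sum at most $m$ times its first term.
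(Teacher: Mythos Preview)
Your overall strategy---group the double sum by $u=s+t$ and then sum over at most $Cm$ values of $u$---does lead to a proof, but the intermediate bound you state is false. You assert that
\[
\sum_{s+t=u}\binom{u}{s}\binom{\beta_1+\beta_2-2u}{\beta_1-2s}\ \leq\ C\,\binom{\beta_1+\beta_2-u}{\beta_1-u}
\]
with a universal $C$. Take $\beta_1=\beta_2=2k$ and $u=k$ (with $m\geq 4k+1$). The single term $s=t=k/2$ (for $k$ even) already gives $\binom{k}{k/2}\binom{2k}{k}\sim c\,8^{k}/k$, whereas your claimed bound is $\binom{3k}{k}\sim c'\,(27/4)^{k}/\sqrt{k}$; since $8>27/4$, the ratio tends to infinity. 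So no universal constant works here, and your ``every other index costs only a constant'' heuristic does not survive the $2s$ in the lower entry. The Vandermonde identity you wrote at the top, with shifts in \emph{both} arguments, does not match the structure $\binom{N}{M-2s}$.

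The fix is to skip the sharper $u$-dependent envelope entirely: the inner sum equals the coefficient of $x^{\beta_1}$ in $(1+x^2)^{u}(1+x)^{\beta_1+\beta_2-2u}$, and since $1+x^2\leq (1+x)^2$ coefficient-wise, this is at most the coefficient of $x^{\beta_1}$ in $(1+x)^{\beta_1+\beta_2}$, namely $\binom{\beta_1+\beta_2}{\beta_1}$. Summing over the at most $m$ values of $u$ gives \eqref{*}; the shifted versions \eqref{*1}--\eqref{*2} follow identically. With this correction your argument is complete, and in fact cleaner than the paper's: the paper instead uses the inequality $\binom{n}{i}\binom{m}{j}\leq\binom{n+m}{i+j}$ twice to reduce \eqref{*} to the estimate $\sum_{s\leq[\beta_1/2]}\sum_{t\leq[\beta_2/2]}\binom{s+t}{s}^{-1}\leq Cm$, which it then proves by splitting off $s\leq 1$ or $t\leq 1$ and applying Stirling's formula to control the tail $s,t\geq 2$. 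Your generating-function route avoids Stirling altogether.
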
  We note that in particular the constant $C$ is independent of $m$ and $\beta$.
\begin{proof}
Due to symmetry we only give the proof of \eqref{*}. Estimates
\eqref{*1} and \eqref{*2} are proven {\it mutatis-mutandi}. First we
recall that given $\alpha,\gamma \in {\mathbb N}_{0}^{3}$, with
$\gamma \leq \alpha$, we have
\begin{align*}
  {\alpha \choose \gamma } \leq {|\alpha| \choose |\gamma|}.
\end{align*}
Using the above inequality we get
\begin{align*}
  &\sum\limits_{s=0}^{[\beta_1/2]} \sum\limits_{t=0}^{[\beta_2/2]} {
    \beta_1 + \beta_2 - 2s - 2t \choose \beta_1 - 2s} {s+t \choose
    s} {\beta_1 + \beta_2 \choose \beta_1}^{-1} \notag\\
  &\qquad \qquad \qquad \leq   \sum\limits_{s=0}^{[\beta_1/2]} \sum\limits_{t=0}^{[\beta_2/2]} { \beta_1 + \beta_2 - s - t \choose \beta_1 - s} {\beta_1 + \beta_2 \choose \beta_1}^{-1} \leq   \sum\limits_{s=0}^{[\beta_1/2]} \sum\limits_{t=0}^{[\beta_2/2]}  {s+t \choose s}^{-1}.
\end{align*}
The lemma is then proven if we find a constant $C$ such that
\begin{align*}
\sum\limits_{s=0}^{[\beta_1/2]} \sum\limits_{t=0}^{[\beta_2/2]}
{s+t \choose s}^{-1} \leq C (\beta_1 + \beta_2).
\end{align*}
Without loss of generality we may assume that $\beta_1,\beta_2 \geq
4$. We split the above sum into
\begin{align}
  \sum\limits_{s=0}^{[\beta_1/2]} \sum\limits_{t=0}^{[\beta_2/2]}  {s+t \choose s}^{-1} &\leq \sum\limits_{t=0}^{[\beta_2/2]}  {t \choose 0}^{-1}+{t+1\choose 1}^{-1}  + \sum\limits_{s=0}^{[\beta_1/2]}  {s \choose s}^{-1} + {s+1 \choose s}^{-1}     \notag \\
& \qquad + \sum\limits_{s=2}^{[\beta_1/2]} \sum\limits_{t=2}^{[\beta_2/2]}  {s+t \choose s}^{-1} \notag \\
& = T_1 + T_2 + T_3. \label{3}
\end{align}
It is clear that
\begin{align}\label{eq:testtest}
  T_1 + T_2 \leq C (\beta_1 + \beta_2).
\end{align}
We estimate $T_3$ by appealing to the Stirling estimate
(cf.~\cite[p.~200]{R})
\begin{align*}
  e^{7/8} \sqrt{n} \left(\frac ne\right)^n < n! <e \sqrt{n} \left(\frac
  ne\right)^n.
\end{align*}
This implies
\begin{align*}
  \frac{s! t!}{(s+t)!} \leq e^{9/8} \sqrt{\frac{st}{s+t}} \frac{1}{(1 + s/t)^t} \frac{1}{(1+t/s)^s}.
\end{align*}
Thus we obtain
\begin{align}
  T_3 &\leq C \sum\limits_{s=2}^{[\beta_1/2]} \sum\limits_{t=2}^{[\beta_2/2]} \sqrt{t} \frac{1}{(1+t/s)^s}. \label{9}
\end{align}
Since $s\geq 2$, the Binomial Theorem implies
\begin{align*}
\left(1+\frac ts\right)^s \geq 1 + {s \choose 2}
\left(\frac{t}{s}\right)^2,
\end{align*}
and by \eqref{9} we have
\begin{align*}
  T_3 &\leq C \sum\limits_{s=2}^{[\beta_1/2]} \sum\limits_{t=2}^{[\beta_2/2]} \sqrt{t} \frac{1}{t^2} \leq C \left(\sum\limits_{s=2}^{[\beta_1/2]} 1\right) \left( \sum\limits_{t=2}^{\infty} \frac{1}{t^{3/2}} \right) \leq C \beta_1.
\end{align*}
Since $\beta_1
+\beta_2 \leq m-1$, the above inequality, \eqref{3}, and \eqref{eq:testtest} complete the proof of the lemma.\end{proof}

\begin{proof}[Proof of Lemma~\ref{lemma:pressure}]
First, note that since $p$ satisfies the elliptic Neumann problem \eqref{eq:P1}--\eqref{eq:P20} we
may use Lemma~\ref{lemma:neumann} to estimate higher derivatives of
$\partial_3 p$ as
\begin{align*}
  \sum\limits_{|\alpha|=m, \alpha_3\neq
  0} M_\alpha \Vert{\partial_3 \partial^\alpha
  p}\Vert_{L^2} \leq C
  \sum\limits_{|\alpha|=m, \alpha_3\neq 0}  \sum\limits_{\substack{s,t\in {\mathbb N}_0,|\beta|=m-1\\ \beta'-\alpha' = (2s,2t)}} M_\alpha
  {s+t \choose s} \Vert{\partial^\beta
  (\partial_i u_k \partial_k u_i)}\Vert_{L^2}.
\end{align*}
By re-indexing the terms in the parenthesis, the right side of the
above inequality may be re-written as
\begin{align*}
\sum\limits_{|\beta|=m-1} \sum\limits_{s=0}^{[\beta_1/2]}
\sum\limits_{t=0}^{[\beta_2/2]}
  {\beta_1 + \beta_2 -2s -2t \choose \beta_1 -2s}
  {s+t \choose s} \Vert{\partial^\beta
  (\partial_i u_k \partial_k u_i)}\Vert_{L^2}.
\end{align*}
Using the estimate \eqref{*} of Lemma~\ref{lemma:*} we bound the above
expression by
\begin{align*}
  C m \sum\limits_{|\beta|=m-1} M_\beta
  \Vert{\partial^\beta (\partial_i u_k \partial_k u_i)}\Vert_{L^2}
\end{align*}
and therefore
\begin{align}\label{eq:p3est}
  & \sum\limits_{m=3}^{\infty} \left(
  \sum\limits_{|\alpha|=m,\alpha_3\neq 0} M_\alpha \Vert{\partial_3 \partial^\alpha
  p}\Vert_{L^2}\right) \frac{\tau^{m-3}}{(m-3)!^s} \notag\\
  & \qquad \qquad \qquad \leq C
  \sum\limits_{m=3}^{\infty}  \sum\limits_{|\beta|=m-1} M_\beta
  \Vert{\partial^\beta (\partial_i u_k \partial_k u_i)}\Vert_{L^2} \frac{m
  \tau^{m-3}}{(m-3)!^s}.
\end{align} On the other hand, higher derivatives of $\partial_1 p$ are
estimated using the decomposition
\begin{align}
 \sum\limits_{|\alpha|=m,
\alpha_3\neq 0} M_\alpha \Vert{\partial_1
\partial^\alpha p}\Vert_{L^2} & =  \sum\limits_{|\alpha|=m,
\alpha_3=1} M_\alpha \Vert{\partial_1
\partial^\alpha p}\Vert_{L^2} + \sum\limits_{|\alpha|=m,
\alpha_3\geq 2} M_\alpha \Vert{\partial_1
\partial^\alpha p}\Vert_{L^2}. \label{eq:ppp1}
\end{align}
By Remark~\ref{rem:pres}, the first term on the right of
\eqref{eq:ppp1} is bounded by
\begin{align}
  C \sum\limits_{|\alpha|=m,
\alpha_3=1} M_\alpha \Vert{
\partial^{\alpha'} \left(\partial_i u_k \partial_k u_i\right) }\Vert_{L^2}
=  C \sum\limits_{|\beta|=m-1, \beta_3 =  0} M_\beta \Vert{
\partial^\beta \left(\partial_i u_k \partial_k u_i\right)}\Vert_{L^2}.
\label{eq:ppp2}
\end{align}
Using estimate \eqref{eq:neumannlemma1}, the second term on the
right side of \eqref{eq:ppp1} is estimated by
\begin{align*}
  C \sum\limits_{|\alpha|=m,\alpha_3 \geq 2} \left( \sum\limits_{\substack{s,t\in {\mathbb N}_{0},|\beta|=m-1\\ \beta' - \alpha' = (2s+1,2t)}}
  M_\alpha {s+t \choose s} \Vert{\partial^\beta \left( \partial_i u_k \partial_k u_i\right)}\Vert_{L^2}\right).
\end{align*}
By re-indexing the above expression equals
\begin{align*}
  C \sum\limits_{|\beta|=m-1, \beta_1 \geq 1}
  \sum\limits_{s=0}^{[(\beta-1)/2]} \sum\limits_{t=0}^{[\beta_2 /2]}
  {\beta_1 -1 + \beta_2 - 2s -2t \choose \beta_1 - 2s - 1} {s+t
  \choose s} \Vert{\partial^\beta \left(\partial_i u_k \partial_k u_i\right)}\Vert_{L^2},
\end{align*}
and using \eqref{*1} it is bounded from above by
\begin{align}
  C m \sum\limits_{|\beta|=m-1, \beta_1 \geq 1} M_\beta \Vert{\partial^\beta \left(\partial_i u_k \partial_k u_i\right)}\Vert_{L^2}. \label{eq:ppp3}
\end{align}
Therefore, by \eqref{eq:ppp1}, \eqref{eq:ppp2}, and \eqref{eq:ppp3},
we have
\begin{align}\label{eq:p1est}
  &\sum\limits_{m=3}^{\infty} \left(
  \sum\limits_{|\alpha|=m,\alpha_3\neq 0} M_\alpha \Vert{\partial_1 \partial^\alpha
  p}\Vert_{L^2}\right) \frac{\tau^{m-3}}{(m-3)!^s} \notag\\
  & \qquad \qquad \qquad \leq C
  \sum\limits_{m=3}^{\infty}  \sum\limits_{|\beta|=m-1} M_\beta
  \Vert{\partial^\beta (\partial_i u_k \partial_k u_i)}\Vert_{L^2} \frac{m
  \tau^{m-3}}{(m-3)!^s}.
\end{align}
By symmetry, we also get
\begin{align} \label{eq:p2est}
  & \sum\limits_{m=3}^{\infty} \left( \sum\limits_{|\alpha|=m, \alpha_3\neq
  0} M_\alpha \Vert{\partial_2 \partial^\alpha
  p}\Vert_{L^2}\right) \frac{\tau^{m-3}}{(m-3)!^s} \notag\\
  & \qquad \qquad \qquad \leq C \sum\limits_{m=3}^{\infty} \sum\limits_{|\beta|=m-1} M_\beta
  \Vert{\partial^\beta (\partial_i u_k \partial_k u_i)}\Vert_{L^2} \frac{m
  \tau^{m-3}}{(m-3)!^s}.
\end{align}
Combining \eqref{eq:p3est}, \eqref{eq:p1est}, \eqref{eq:p2est}, and
the Leibniz rule we obtain
\begin{align}
  \PPP \leq C \sum\limits_{m=3}^{\infty} \sum\limits_{|\beta|=m-1} M_\beta
  \Vert{\partial^\beta (\partial_i u_k \partial_k u_i)}\Vert_{L^2} \frac{m
  \tau^{m-3}}{(m-3)!^s}\leq C \sum\limits_{m=3}^{\infty} \sum\limits_{j=0}^{m-1}
  \PPP_{m,j}, \label{eq:Pest}
\end{align} where
\begin{align*}
  \PPP_{m,j} = \frac{m \tau^{m-3}}{(m-3)!^s} \sum\limits_{|\beta|=m-1}
\sum\limits_{|\gamma| = j, \gamma \leq \beta}
  M_\beta {\beta \choose \gamma}
  \Vert{  \partial^\gamma \partial_i u_k \cdot \partial^{\beta-\gamma}
  \partial_k u_i}\Vert_{L^2}.
\end{align*}We split the right side of \eqref{eq:Pest} into seven terms according to
the values of $m$ and $j$. For low $j$, we claim
\begin{align}
  & \sum\limits_{m=3}^{\infty} \PPP_{m,0} \leq C |u|_{1,\infty} |u|_3 + C\tau |u|_{1,\infty} \Vert{u}\Vert_{Y_\tau}\label{eq:Plow0}\\
  & \sum\limits_{m=3}^{\infty} \PPP_{m,1} \leq C |u|_{2,\infty} |u|_2 + C\tau |u|_{2,\infty} |u|_3 + C \tau^2 |u|_{2,\infty} \Vert{u}\Vert_{Y_\tau}\label{eq:Plow1}\\
  & \sum\limits_{m=5}^{\infty} \PPP_{m,2} \leq  C\tau^2 |u|_{3,\infty} |u|_3 + C \tau^3 |u|_{3,\infty} \Vert{u}\Vert_{Y_\tau}\label{eq:Plow2}
  \end{align} for intermediate $j$, we have
  \begin{align}
  & \sum\limits_{m=8}^{\infty} \sum\limits_{j=3}^{[m/2]-1} \PPP_{m,j} \leq C \tau^{3/2} \Vert{u}\Vert_{X_\tau} \Vert{u}\Vert_{Y_\tau}\label{eq:Plowmed}\\
  & \sum\limits_{m=6}^{\infty} \sum\limits_{j=[m/2]}^{m-3} \PPP_{m,j} \leq C \tau^{3/2} \Vert{u}\Vert_{X_\tau} \Vert{u}\Vert_{Y_\tau} \label{eq:Phighmed}
  \end{align} and for high $j$, we claim
  \begin{align}
  & \sum\limits_{m=4}^{\infty} \PPP_{m,m-2} \leq C\tau |u|_{2,\infty} |u|_3 + C \tau^2 |u|_{2,\infty} \Vert{u}\Vert_{Y_\tau}\label{eq:Phighm-2}\\
  & \sum\limits_{m=3}^{\infty} \PPP_{m,m-1} \leq C |u|_{1,\infty} |u|_3 + C \tau |u|_{1,\infty} \Vert{u}\Vert_{Y_\tau}\label{eq:Phighm-1}.
\end{align}
The above estimates are proven similarly to
\eqref{eq:low1}--\eqref{eq:highm} in the proof of
Lemma~\ref{lemma:commutator}. Due to symmetry we have
presented there the proofs of the estimates where $j\leq m-j$. For
completeness of the exposition we provide the proofs of
\eqref{eq:Phighmed}--\eqref{eq:Phighm-1}, where we have $m-j
< j$.

Proof of \eqref{eq:Phighmed}: We proceed as in the proof of
\eqref{eq:lowmed} in Section~\ref{sec:commutator}. First, the
H\"older and Sobolev inequalities imply that
\begin{align*}
  \Vert{\partial^\gamma \partial_i u_k \cdot \partial^{\beta-\gamma}
  \partial_k  u_i}\Vert_{L^2} \leq C \Vert{\partial^\gamma \partial_i
  u_k}\Vert_{L^2} \Vert{ \partial^{\beta-\gamma}
  \partial_k u_i}\Vert_{L^2}^{1/4} \Vert{\Delta \partial^{\beta-\gamma}
  \partial_k u_i}\Vert_{L^2}^{3/4}.
\end{align*}
Therefore,
\begin{align*}
  \sum\limits_{m=6}^{\infty} \sum\limits_{j = [m/2]}^{m-3}
  \PPP_{m,j} & \leq C \sum\limits_{m=6}^{\infty} \sum\limits_{j =
  [m/2]}^{m-3} \sum\limits_{|\beta|=m-1} \sum\limits_{|\gamma|=j, \gamma \leq \beta} \left( M_\gamma \Vert{ \partial^\gamma \partial_i u_k}\Vert_{L^2} \frac{(j-2) \tau^{j-3}}{(j-2)!^s}\right) \notag \\
  &\qquad \times  \left( M_{\beta-\gamma} \Vert{ \partial^{\beta-\gamma} \partial_k u_i}\Vert_{L^2} \frac{\tau^{m-j-3}}{(m-j-3)!^s}\right)^{1/4}  \notag\\
  & \qquad \times \left( M_{\beta-\gamma} \Vert{\Delta \partial^{\beta-\gamma} \partial_k u_i}\Vert_{L^2} \frac{\tau^{m-j-1}}{(m-j-1)!^s}\right)^{3/4} \tau^{3/2} {\mathcal
  B}_{\beta,\gamma,s},
\end{align*}
where
\begin{align*}
{\mathcal B}_{\beta,\gamma,s} = M_\beta M_{\gamma}^{-1}
  M_{\beta-\gamma}^{-1} {\beta \choose \gamma} \frac{m (j-2)!^s (m-j-3)!^{s/4}
  (m-j-1)!^{3s/4}}{(j-2)(m-3)!^s}.
\end{align*}
By Lemma~\ref{lemma:choose} we have that for $m\geq 6$ and
$[m/2]\leq j \leq m-3$
\begin{align*}
{\mathcal B}_{\beta,\gamma,s} &\leq C {m-1 \choose j} {m-3 \choose
j-2}^{-s} \frac{m}{(j-2) (m-j-1)^{s/4} (m-j-2)^{s/4}}\\
& \leq C {m-3 \choose j-2}^{1-s} (m-j)^{-s/2},
\end{align*}since ${m-1 \choose j} \leq C {m-3 \choose j-2}$, when $j \geq m/2$.
Therefore, ${\mathcal B}_{\beta,\gamma,s} \leq C$; hence, by
Lemma~\ref{lemma:product} and the discrete H\"older inequality, we
have
\begin{align*}
\sum\limits_{m=6}^{\infty} \sum\limits_{j = [m/2]}^{m-3} \PPP_{m,j}
&\leq C \tau^{3/2} \sum\limits_{m=6}^{\infty}
\sum\limits_{j=[m/2]}^{m-3} \left( |\partial_k
u_i|_{m-j-1}
\frac{\tau^{m-j-3}}{(m-j-3)!^s}\right)^{1/4}\\
& \qquad \times \left( |\Delta \partial_k u_i|_{m-j-1}
\frac{\tau^{m-j-1}}{(m-j-1)!^s} \right)^{3/4} \left( |\partial_i u_k|_{j}
\frac{(j-2)\tau^{j-3}}{(j-2)!^s}\right) .
\end{align*}
The discrete Young and H\"older inequalities then give
\begin{align*}
  \sum\limits_{m=6}^{\infty} \sum\limits_{j = [m/2]}^{m-3}
\PPP_{m,j}\leq C \tau^{3/2}\Vert{u}\Vert_{X_\tau}
\Vert{u}\Vert_{Y_\tau},
\end{align*}
concluding the proof of \eqref{eq:Phighmed}.

Proof of \eqref{eq:Phighm-2}: As above we use the H\"older
inequality and obtain
\begin{align*}
  \sum\limits_{m=4}^{\infty} \PPP_{m,m-2} & \leq
  \sum\limits_{m=4}^{\infty} \sum\limits_{|\beta|=m-1}
  \sum\limits_{|\gamma|=m-2,\gamma\leq \beta} M_\beta {\beta \choose
  \gamma} \Vert{\partial^\gamma \partial_i u_k}\Vert_{L^2}
  \Vert{\partial^{\beta-\gamma} \partial_k u_i}\Vert_{L^\infty}
  \frac{m \tau^{m-3}}{(m-3)!^s} \\
  &\leq C\tau \sum\limits_{|\beta|=3} \sum\limits_{|\gamma|=2,
  \gamma\leq\beta} \Vert{\partial^\gamma
  \partial_i u_k}\Vert_{L^2}
  \Vert{\partial^{\beta-\gamma}
  \partial_k u_i}\Vert_{L^\infty} \\
  & + C \tau^2 \sum\limits_{m=5}^{\infty} \sum\limits_{|\beta|=m-1}
  \sum\limits_{|\gamma|=m-2,\gamma\leq\beta} \left(M_\gamma
  \Vert{\partial^\gamma \partial_i u_k}\Vert_{L^2} \frac{m
  \tau^{m-5}}{(m-4)!^s}\right)  \\
  & \qquad \times \left(M_{\beta-\gamma}
  \Vert{\partial^{\beta-\gamma}
  \partial_k u_i}\Vert_{L^\infty}\right) M_\beta M_{\gamma}^{-1}
  M_{\beta-\gamma}^{-1} {\beta \choose \gamma} \frac{1}{(m-3)^s}.
\end{align*}
Using Lemma~\ref{lemma:choose}, Lemma~\ref{lemma:product}, and
$s\geq 1$, this shows that the far right side of the above chain of
inequalities is bounded by
\begin{align*}
  C\tau |\partial_i u_k|_2 |\partial_k u_i|_{1,\infty} &+ C \tau^2 |\partial_k u_i|_{1,\infty} \sum\limits_{m=5}^{\infty} |
  \partial_i u_k|_{m-2} \frac{m \tau^{m-5}}{(m-4)!^s} \notag\\
  & \qquad \qquad \qquad \leq C\tau |u|_{2,\infty} |u|_3 + C \tau^2
  |u|_{2,\infty} \Vert{u}\Vert_{Y_\tau},
\end{align*} thereby proving \eqref{eq:Phighm-2}.

Proof of \eqref{eq:Phighm-1}: By the H\"older inequality we have
\begin{align*}
  \sum\limits_{m=3}^{\infty} \PPP_{m,m-1} &\leq
  \sum\limits_{m=3}^{\infty} \sum\limits_{|\beta| = m-1} M_\beta
  \Vert{ \partial^\beta \partial_i u_k}\Vert_{L^2}
  \Vert{\partial_k u_i}\Vert_{L^\infty} \frac{m
  \tau^{m-3}}{(m-3)!^s}\\
  &\leq C |\partial_i u_k |_{2} \left \Vert\partial_k u_i\right\Vert_{L^\infty}  + C \tau \left \Vert\partial_k u_i\right\Vert_{L^\infty}
  \sum\limits_{m=4}^{\infty} |\partial_i u_k|_{m-1} \frac{m
  \tau^{m-4}}{(m-3)!^s}\\
  & \leq C |u|_{1,\infty} |u|_3  + C \tau |u|_{1,\infty}
  \Vert{u}\Vert_{Y_\tau},
\end{align*}
which gives the desired estimate. By symmetry, we may similarly
prove \eqref{eq:Plow0}--\eqref{eq:Plowmed}, but in these cases we
apply the H\"older inequality as
\begin{align*}
\Vert{
\partial^\gamma \partial_i u_k \cdot \partial^{\beta-\gamma}
\partial_k u_i}\Vert_{L^2} \leq \Vert{
\partial^\gamma \partial_i u_k}\Vert_{L^\infty} \Vert{ \partial^{\beta-\gamma}
\partial_k u_i}\Vert_{L^2},
\end{align*} that is we reverse the roles of $j$ and $m-j$. We omit further details. This concludes the proof of Lemma~\ref{lemma:pressure}.
\end{proof}

\section*{Acknowledgments} Both authors were supported in part by the NSF grant DMS-0604886.


\end{document}